\newcommand{\kk}{{\bf k}} 
\newcommand{\Db}[1]{D^b(#1)} 
\newcommand{\rmod}{{\rm mod} \text{-}} 
\newcommand{\smod}{{\rm\underline{mod} \text{-}}} 
\newcommand{\soc}{{\rm soc}} 
\newcommand{\rk}{{\rm rk}} 
\newcommand{\rad}{{\rm rad}} 
\newcommand{\im}{{\rm Im}} 
\newtheorem{cor}{Corollary}[section]
\newtheorem{lem}[cor]{Lemma}
\newtheorem{rem}[cor]{Remark}
\newtheorem{ex}[cor]{Example}
\newtheorem{opr}[cor]{Definition}
\newtheorem{thm}[cor]{Theorem}
\newtheorem{prop}[cor]{Proposition}
\begin{document}
\title{Brauer graph algebras are closed under derived equivalence}
\author{Mikhail Antipov$^1$}
\thanks{$^1$Higher School of Economics, Saint Petersburg, Russia} 
\author{Alexandra Zvonareva$^2$}
\thanks{$^2$Institute of Algebra and Number Theory, University of Stuttgart, Germany, alexandra.zvonareva@mathematik.uni-stuttgart.de}
\date{}

\maketitle
\vspace{-20pt}
\begin{abstract}
In this paper  the  class  of  Brauer  graph  algebras  is proved to be closed  under  derived  equivalence. For that we use the rank of the maximal torus  of the identity component $Out^0(A)$ of the group of outer automorphisms of a symmetric stably biserial algebra $A$. 
\end{abstract}

\medskip

\noindent\textbf{Mathematics Subject Classification (2010):} 16G10, 18E30

\medskip

\noindent\textbf{Keywords:} Brauer graph algebras, symmetric special biserial algebras, group of outer automorphisms, derived invariants, derived equivalence.

\section{Introduction}
Brauer graph algebras or equivalently symmetric special biserial algebras, originating from modular representation theory,  are  studied quite extensively.  They appear in classifications of various classes of algebras including blocks with cyclic or dihedral defect groups \cite{Dade,Don}, blocks of Hecke algebras \cite{Ar1,Ar2} and others. Brauer tree algebras, the subclass of Brauer graph algebras of finite representation type, contain all blocks with cyclic defect group. 

In this paper we make a final step in the proof of the fact that Brauer graph algebras  are closed under derived equivalence.  This fact was believed to be true, based on the work 
of Pogorza\l{}y \cite{Pog}. In \cite{AIP},
counterexamples to some of the statements of \cite{Pog} were given. In \cite{AZ1}, we revised the proof of the fact that the only algebras possibly stably (and thus derived) equivalent to self-injective special biserial algebras (a class containing Brauer graph algebras) are self-injective stably biserial (see Section \ref{SecPre}). As a finite-dimensional algebra  derived equivalent  to a  symmetric algebra  is itself symmetric \cite{RicDe}, we can restrict our attention to symmetric stably biserial algebras. It turns out that in odd characteristic the class of symmetric stably biserial algebras coincides with the class of Brauer graph algebras, whereas in characteristic 2 this is not the case \cite{AZ1}.

The general strategy of the proof of the fact that Brauer graph algebras are closed under derived equivalence follows the classical proof for Brauer tree algebras. The fact that Brauer tree algebras are closed under stable equivalence was proved in \cite{GabRie}. Since by \cite{RicSt} derived equivalence for self-injective algebras implies stable equivalence, it follows that this class is closed under derived equivalence as well. It turns out that the proof for the whole class of Brauer graph algebras is much more involved and requires an extra step in characteristic 2, which is provided in this paper. 

A symmetric stably biserial algebra can be given by the same combinatorial data as the Brauer graph algebra, that is a graph on a surface and a number attached to each vertex of this graph, called the multiplicity. Additionally, one needs to fix a distinguished class of loops in the quiver, satisfying certain conditions,  which we call deformed loops (see Section \ref{SecPre}). In case the number of deformed loops is 0 we recover the usual definition of a Brauer graph algebra. Since for local algebras derived equivalence implies Morita equivalence \cite{RZ}, we will sometimes assume that $A$ has at least 2 simple modules. For further reference, let us denote by $V(\Gamma), E(\Gamma)$ and $F(\Gamma)$ the vertices, edges and faces of the Brauer graph $\Gamma$.

The main technique, used in this paper, is the computation of the rank of the maximal torus $T(A)$ of the identity component $Out^0(A)$ of the group of outer automorphisms for a symmetric stably biserial algebra $A$. The group $Out^0(A)$  is a derived invariant \cite{HZS,Rouq} used quite seldom. The only previous systematic application we know of is the proof of the fact that the number of arrows in the quiver of a gentle algebra is a derived invariant \cite{AAG}.
 
\begin{thm}\label{TheoremRank} 
Let $\kk$ be an algebraically closed field.  Let $A$ be a symmetric stably biserial algebra over $\kk$ ($char(\kk)=2$) or  a symmetric special biserial algebra over $\kk$ ($char(\kk)\neq 2$) with at least two non-isomorphic simple modules, which is not a caterpillar (see Section \ref{NNN}). Let $\Gamma$ be the Brauer graph of $A$ and let $d$ be the number of deformed loops in $A$ ($d=0$ for the symmetric special biserial case). The rank of $T(A)$ is $|E(\Gamma)|-|V(\Gamma)|-d+2$.
\end{thm}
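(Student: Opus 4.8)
The plan is to realise a maximal torus of $Out^{0}(A)$ as the torus of diagonal automorphisms of $A$ modulo the diagonal inner automorphisms, and then to read off its rank from the combinatorics of $\Gamma$. Write $A=\kk Q/I$ with $Q$ the Gabriel quiver of $A$: the vertices $Q_0$ of $Q$ are the edges of $\Gamma$, so $|Q_0|=|E(\Gamma)|\ge 2$ by hypothesis; the arrows $Q_1$ come from the cyclic orderings at the vertices of $\Gamma$; and $I$ is generated by the monomial relations, the commutativity relations $C^{m_v}_{v,e}=C^{m_w}_{w,e}$ for the non-loop edges $e=\{v,w\}$ (replaced by a monomial relation at an edge incident to a truncated vertex), and, in the stably biserial case, the $d$ deformed relations at the deformed loops. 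Let $T\subseteq(\kk^{*})^{Q_1}$ be the group of diagonal automorphisms, those fixing the idempotents and scaling each arrow $\alpha$ by a scalar $t_\alpha$; a tuple $(t_\alpha)$ lies in $T$ exactly when it preserves all the relations. Conjugation by a unit $\sum_{v}\lambda_v e_v$ scales an arrow $\alpha\colon x\to y$ by $\lambda_y\lambda_x^{-1}$, so it gives a subtorus $T_{\mathrm{inn}}\subseteq T$, the image of $(\kk^{*})^{Q_0}\to(\kk^{*})^{Q_1}$, whose kernel is the diagonal $\kk^{*}$ (as $Q$ is connected); thus $\dim T_{\mathrm{inn}}=|Q_0|-1=|E(\Gamma)|-1$, and $T_{\mathrm{inn}}$ is a maximal torus of $Inn(A)$. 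The first step is to prove that the identity component $T^{0}$ is a \emph{maximal} torus of $Aut(A)$. Granting this, and using that $Inn(A)$ is connected, solvable and normal in $Aut^{0}(A)$ and that every automorphism inducing the identity on $A/\rad(A)$ and on $\rad(A)/\rad^{2}(A)$ is unipotent (whence $T\cap Inn(A)=T_{\mathrm{inn}}$), one concludes that $D(A)$ is the image of $T^{0}$ in $Out(A)$, so that $\rk D(A)=\dim T-\dim T_{\mathrm{inn}}=\dim T-|E(\Gamma)|+1$.

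It remains to compute $\dim T$. Each arrow of $Q$ lies in the cyclic path around a unique vertex of $\Gamma$, so $X_v:=\prod_{\alpha\ \text{around}\ v}t_\alpha$ defines a surjective homomorphism $(\kk^{*})^{Q_1}\twoheadrightarrow(\kk^{*})^{V_0}$, where $V_0$ is the set of non-truncated vertices; its kernel is a torus of dimension $|Q_1|-|V_0|=2|E(\Gamma)|-|V(\Gamma)|$ (the truncated vertices cancelling). A relation $C^{m_v}_{v,e}=C^{m_w}_{w,e}$ uses each arrow around $v$ exactly $m_v$ times on one side and each arrow around $w$ exactly $m_w$ times on the other, so it amounts to the single equation $X_v^{m_v}=X_w^{m_w}$, while a loop that is not deformed contributes nothing, since its two maximal paths use the same arrows with the same multiplicities. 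Since $\Gamma$, and hence the subgraph spanned by $V_0$, is connected, these equations force all the $X_v^{m_v}$ to coincide, and pulling a one-dimensional subtorus back along the isogeny $(X_v)\mapsto(X_v^{m_v})$ shows that $\dim T=2|E(\Gamma)|-|V(\Gamma)|+1$ in the Brauer graph case. Finally, each of the $d$ deformed relations differs from a trivial loop relation by an extra monomial involving a genuinely different multiset of arrows, hence cuts down $T$ by one further independent condition, so in general $\dim T=2|E(\Gamma)|-|V(\Gamma)|+1-d$. Therefore
\[
\rk D(A)=\bigl(2|E(\Gamma)|-|V(\Gamma)|+1-d\bigr)-\bigl(|E(\Gamma)|-1\bigr)=|E(\Gamma)|-|V(\Gamma)|-d+2,
\]
which by Euler's formula for the ribbon surface of $\Gamma$ also equals $|F(\Gamma)|+2g-d$, with $g$ the genus.

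The main obstacle is the first step: proving that $T^{0}$ is a maximal torus of $Aut(A)$, equivalently that every semisimple automorphism of $A$ is conjugate into $T^{0}$ and that the complement of $T^{0}\cdot Inn(A)$ in $Aut^{0}(A)$ is unipotent. This rests on a sufficiently precise normal form for the automorphisms of a symmetric stably biserial algebra, exploiting that each vertex of $Q$ has at most two incoming and two outgoing arrows and that the monomial relations are rich enough to forbid ``mixing'' of parallel arrows; it is precisely where this normal form can fail that the hypotheses ``not a caterpillar'' (Section~\ref{NNN}) and ``at least two non-isomorphic simple modules'' are needed. In characteristic $2$ there is the additional task of checking that, after a suitable preliminary rescaling of the arrows, the deformed relations are preserved by $T$ and are independent of the commutativity relations, so that their total contribution to the codimension is exactly $d$; the rest of the bookkeeping — truncated vertices and non-deformed loops — is routine and, as indicated, leaves the final count unchanged.
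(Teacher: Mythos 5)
Your plan follows essentially the same route as the paper: identify the rank of $D(A)$ with the difference between the rank of the group of ``diagonal parts'' of automorphisms fixing the idempotents and the rank of the diagonal inner automorphisms, then count the latter as $|E(\Gamma)|-1$ and the former via the cycle equations $X_v^{m_v}=\underline{k}$ and the $d$ deformed-loop equations. Your final arithmetic, $\bigl(2|E(\Gamma)|-|V(\Gamma)|+1-d\bigr)-\bigl(|E(\Gamma)|-1\bigr)$, agrees with the paper's $|Q_1|-|Q_1/\pi|-d+1-(|Q_0|-1)$.

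However, there is a genuine gap: the two steps you defer as ``the main obstacle'' are precisely the mathematical content of the theorem, and neither is routine. First, you need that the image of $T^0$ in $Out^0(A)$ really is a maximal torus, equivalently (as the paper does it) that the subgroup $H'$ of automorphisms fixing the idempotents embeds into the lower triangular matrices with respect to a suitable ordering of a path basis. This requires a concrete analysis of when an automorphism can mix parallel arrows: one must introduce the ``exceptional pairs'' $(\beta,\beta')$ with $\pi^2(\beta)\neq\beta$, $\pi^2(\beta')=\beta'$, order them correctly, and run the induction along $\pi^l(\beta),\pi^l(\beta')$ until the parallelism breaks --- the caterpillar is exactly the case where it never breaks, and $|Q_0|\geq 2$ rules out loops. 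Your proposal names the hypotheses but does not carry out this case analysis. Second, and independently, you must show that the diagonal part $(k_\alpha)$ of an \emph{arbitrary} element of $H'$ (not just of an honest diagonal automorphism) satisfies exactly the equations $\prod_{\alpha\in C}k_\alpha^{m(C)}=\underline{k}$ and, for each deformed loop, $k_\alpha^2=t_\alpha\underline{k}$; otherwise $\dim T$ only bounds the rank of $D_{H'}$ from below. The deformed-loop equation in characteristic $2$ is not a one-line ``independent condition'': in the paper it requires controlling which off-diagonal coefficients $k_{\alpha,p}$ can be nonzero and an inductive cancellation argument ($k_{(w_\alpha\alpha)^i}+k_{(\alpha w_\alpha)^i}=0$) inside $f(\alpha)^2$ before one can isolate $k_\alpha^2=t_\alpha\underline{k}$. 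As written, your text asserts the conclusions of both steps without proof, so the argument is a correct outline with the hard core missing.
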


In section \ref{SectionComb}, we revisit the known derived invariants for Brauer graph algebras \cite{AntStab, AntDer, AntTh, AntStabGr} in arbitrary characteristic, providing simpler proofs of their invariance for the larger class of symmetric stably biserial algebras and correcting some inaccuracies in the existing literature.

\begin{thm}\label{TheoremDerInv}
Let $A$ be a symmetric stably biserial algebra with a Brauer graph $\Gamma$ and with at least two simple modules. The following are invariants of $A$ under a derived equivalence of symmetric stably biserial algebras: $|V(\Gamma)|$, $|E(\Gamma)|$, $|F(\Gamma)|$, the multiset of perimeters of faces, the multiset of multiplicities, and bipartivity of $\Gamma$.
\end{thm}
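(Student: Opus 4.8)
\emph{Sketch of the intended argument.}
Let $A$ be a symmetric stably biserial algebra as in the statement and let $B$ be derived equivalent to it; write $\Gamma,\Gamma'$ for the two Brauer graphs, $m,m'$ for the multiplicity functions and $d,d'$ for the numbers of deformed loops. The quantity $|E(\Gamma)|$ is the number of non-isomorphic simple $A$-modules, i.e.\ the rank of $K_{0}(A)$, so it is visibly a derived invariant; set $n=|E(\Gamma)|=|E(\Gamma')|$. A derived equivalence is induced by a tilting complex, and the classes of its indecomposable summands form a $\mathbb{Z}$-basis of $K_{0}$ in which the Cartan form of $B$ is the transport of that of $A$; hence $C(A)$ and $C(B)$ are congruent over $\mathbb{Z}$, and every $\mathbb{Z}$-congruence invariant of the Cartan matrix --- its rank over $\mathbb{Q}$ and over residue fields, its Smith normal form, its pseudo-determinant --- is a derived invariant. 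By the elementary structure theory of Section~\ref{NNN} the Cartan matrix of a symmetric stably biserial algebra is unaffected by the deformation of loops (deforming a loop changes only socle relations, not composition series of projectives), so $C(A)$ coincides with the Cartan matrix $C(\Gamma,m)=BMB^{\mathrm{T}}$ of the Brauer graph algebra with the same graph and multiplicities, where $B$ is the incidence matrix of $\Gamma$ and $M=\mathrm{diag}(m(v))$.

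The plan is then to combine this with Theorem~\ref{TheoremRank}. Assuming first that $\Gamma$ is not a caterpillar, that theorem gives that $\rk(D(A))=n-|V(\Gamma)|-d+2$, so $|V(\Gamma)|+d$ is a derived invariant; separately, a classical computation with incidence matrices of connected graphs gives $\operatorname{rank}_{\mathbb{Q}}C(\Gamma,m)=|V(\Gamma)|-\varepsilon(\Gamma)$, where $\varepsilon(\Gamma)=1$ if $\Gamma$ is bipartite and $0$ otherwise, so $|V(\Gamma)|-\varepsilon(\Gamma)$ is a derived invariant as well. In odd characteristic $d=0$, so $\varepsilon(\Gamma)$, hence bipartivity, hence $|V(\Gamma)|$, are read off immediately. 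In characteristic $2$ the two displayed invariants determine only $|V(\Gamma)|+d$ and $|V(\Gamma)|-\varepsilon(\Gamma)$, and one needs one further invariant --- extracted either from the $2$-adic data of $C(\Gamma,m)$ or from the stable module category below --- to resolve $\varepsilon(\Gamma)$ against $d$; once this is done $|V(\Gamma)|$ and $d$ follow, and $|F(\Gamma)|$ will drop out of the last step together with the genus via Euler's formula $|V(\Gamma)|-n+|F(\Gamma)|=2-2g$.

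It remains to identify the multiset of face perimeters and the multiset of multiplicities. Since $A$ and $B$ are self-injective, the derived equivalence induces a stable equivalence by \cite{RicSt}, and a stable equivalence commutes with $\Omega$ and respects Auslander--Reiten theory; hence the stable Auslander--Reiten quiver, and in particular the multiset of its $\Omega^{2}$-periodic (tubular) components together with their ranks, is a derived invariant. For a symmetric stably biserial algebra these non-homogeneous tubes are governed by $(\Gamma,m)$ alone, and --- again --- are not disturbed by deformed loops: there is one ``face tube'' per face of $\Gamma$ of rank equal to its perimeter, and one ``vertex tube'' per vertex $v$ of rank controlled by $m(v)$ and $\operatorname{val}(v)$, while the homogeneous tubes carry no combinatorial content. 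The multiset of perimeters is thus read off directly (its cardinality being $|F(\Gamma)|$, consistent with $\sum_{F}\ell(F)=2n$), and combining the vertex-tube ranks with $\sum_{v}\operatorname{val}(v)=2n$ and the invariants already obtained disentangles the valencies from the multiplicities and yields the multiset $\{m(v)\}$ (and, in characteristic $2$, also the parity information needed in the previous paragraph). Finally, caterpillars --- excluded from Theorem~\ref{TheoremRank} --- are of such restricted shape that the few cases in which they could occur on one side of a derived equivalence are checked by hand.

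The main obstacle is this last paragraph: establishing precisely which tubes a given Brauer graph produces, verifying that this description survives the passage from special biserial to stably biserial, and confirming that the resulting numerical data --- together with the Cartan and $D(A)$ invariants --- genuinely recovers the unordered perimeters and multiplicities rather than only partial sums of them. A secondary, bookkeeping-heavy obstacle is the $2$-adic analysis of $C(\Gamma,m)$ (with loops present and with multiplicities possibly divisible by $2$) needed to detect bipartivity uniformly, i.e.\ to handle the ambiguity between $d$ and $\varepsilon(\Gamma)$ in characteristic $2$.
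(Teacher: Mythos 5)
Your outline (Cartan/Grothendieck data for vertices and bipartivity, stable Auslander--Reiten theory for faces) is broadly the same as the paper's, but two of your steps have genuine gaps, and in both places the paper does something different. The first gap is the one you flag yourself: in characteristic $2$ you cannot separate $\varepsilon(\Gamma)$ from $d$ using Theorem \ref{TheoremRank} plus the Cartan rank, and no $2$-adic refinement is needed. In fact Theorem \ref{TheoremRank} is not used in the proof of Theorem \ref{TheoremDerInv} at all (it only enters later, for Corollary \ref{CorClosed}). The paper's Proposition \ref{cycles} first establishes $|E(\Gamma)|$, $|F(\Gamma)|$ and the perimeters (Proposition \ref{faces}), and then uses that $|V(\Gamma)|-|E(\Gamma)|+|F(\Gamma)|$ is the Euler characteristic of the closed oriented surface $\mathcal{S}$, hence even; since $\rk C(A)$ determines $|V(\Gamma)|$ only up to the bipartite shift by $1$, this parity constraint pins down $|V(\Gamma)|$ exactly, and bipartivity then falls out of the rank. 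So the logical order must be: faces first, vertices second --- the opposite of how your sketch is arranged.

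The second gap is in your description of the periodic components. There are no ``vertex tubes'': for a symmetric (stably) biserial algebra every periodic component of $\Gamma_{\smod A}$ other than the exceptional tubes is a rank-one tube of band modules, so the multiset of multiplicities cannot be read off the AR-quiver. The paper recovers the multiplicities $>1$ from the centre, via $Z(A)/\soc(Z(A))\simeq \kk[x_1,\dots,x_{r'}]/\langle x_i^{m(C_i)},(x_ix_j)_{i\neq j}\rangle$ (Proposition \ref{center}), and the number of multiplicity-one vertices then follows from the already-determined $|V(\Gamma)|$. Also, a face of even perimeter $p$ yields two tubes of rank $p/2$ interchanged by $\Omega$, not one of rank $p$; consequently faces of perimeter $1$ and $2$ produce rank-one tubes that AR-theory cannot distinguish from band tubes. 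The paper counts perimeter-one faces through the socle of $Z(A)$ (they correspond to loops $\gamma$ with $\pi(\gamma)\neq\gamma$) and perimeter-two faces from the identity $\sum_i p_i=2|E(\Gamma)|$. Without the centre --- which your proposal never invokes --- neither the multiplicities nor the small-perimeter faces can be extracted, so the third paragraph of your sketch cannot be completed as stated.
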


As a corollary of Theorems \ref{TheoremRank} and \ref{TheoremDerInv} and the fact that Brauer graph algebras can be derived equivalent only to symmetric stably biserial algebras \cite{AZ1} we obtain the following:

\begin{cor}\label{CorClosed}
The class of Brauer graph algebras is closed under derived equivalence. Namely, if $A$ is an algebra Morita equivalent to a Brauer graph algebra and $B$ is an algebra such that $\Db A \simeq \Db B$, then $B$ is  Morita equivalent to a Brauer graph algebra.
\end{cor}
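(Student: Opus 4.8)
The plan is to deduce the statement from Theorems \ref{TheoremRank} and \ref{TheoremDerInv} together with the derived invariance of $Out^0$. Since the bounded derived category depends only on the Morita equivalence class, I may replace $A$ by its basic algebra and assume $A$ is a basic Brauer graph algebra, and likewise replace $B$ by its basic algebra; it then suffices to show that this $B$ is a Brauer graph algebra. By \cite{RicDe} the finite-dimensional algebra $B$, being derived equivalent to the symmetric algebra $A$, is symmetric, and by \cite{RicSt} the derived equivalence restricts to a stable equivalence between the self-injective algebras $A$ and $B$; hence by \cite{AZ1} $B$ is self-injective stably biserial, and therefore symmetric stably biserial. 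Write $\Gamma$ for the Brauer graph of $A$, $\Gamma_B$ for the Brauer graph of $B$, and $d_A = 0$, $d_B \ge 0$ for the respective numbers of deformed loops; the goal is to prove $d_B = 0$. The case of a single simple module is immediate: the number of simple modules is a derived invariant, so $B$ is then also local and $\Db A \simeq \Db B$ forces $A$ and $B$ to be Morita equivalent by \cite{RZ}. From now on assume $A$, and hence $B$, has at least two simple modules.

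Assume for the moment that neither $\Gamma$ nor $\Gamma_B$ is a caterpillar. The group $Out^0$, and in particular the rank of its maximal torus $D(-)$, is preserved under derived equivalence \cite{HZS,Rouq}, so $\rk D(A) = \rk D(B)$. Combining this with Theorem \ref{TheoremRank} applied to $A$ and to $B$ yields
\[
|E(\Gamma)| - |V(\Gamma)| - d_A + 2 \;=\; \rk D(A) \;=\; \rk D(B) \;=\; |E(\Gamma_B)| - |V(\Gamma_B)| - d_B + 2 .
\]
By Theorem \ref{TheoremDerInv} one has $|E(\Gamma)| = |E(\Gamma_B)|$ and $|V(\Gamma)| = |V(\Gamma_B)|$, and therefore $d_B = d_A = 0$. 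Thus $B$ is a Brauer graph algebra, as desired.

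It remains to remove the caterpillar hypothesis, and this is the step I expect to be the main obstacle. The idea is to verify, directly from the definition in Section \ref{NNN}, that whether the Brauer graph of a symmetric stably biserial algebra is a caterpillar is already encoded in the data recorded in Theorem \ref{TheoremDerInv} (the numbers of vertices, edges and faces, the multisets of face perimeters and of multiplicities, and bipartivity); it then follows that $\Gamma$ is a caterpillar exactly when $\Gamma_B$ is, so in the non-caterpillar case the previous paragraph applies verbatim. In the caterpillar case one argues directly that a symmetric stably biserial algebra with caterpillar Brauer graph is necessarily a Brauer graph algebra (for instance, because such a graph carries no loop that could be deformed, forcing $d_B = 0$). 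Granting these two combinatorial facts about caterpillars, the corollary follows; the non-caterpillar part is a purely formal combination of Theorems \ref{TheoremRank} and \ref{TheoremDerInv} with the derived invariance of $Out^0$, so essentially all the remaining content of the proof sits in the caterpillar analysis.
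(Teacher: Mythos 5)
Your overall architecture matches the paper's: reduce to basic algebras, use \cite{RicDe} and \cite{RicSt} together with \cite{AZ1} to conclude that $B$ is symmetric stably biserial, dispose of the local case via \cite{RZ}, and in the non-caterpillar case combine the derived invariance of $Out^0$ with Theorems \ref{TheoremRank} and \ref{TheoremDerInv} to force $d_B=d_A=0$. That part is correct and is exactly the paper's argument.

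The gap is where you yourself flag it: the caterpillar case. Your plan rests on the unproven claim that caterpillar-hood of the Brauer graph is determined by the invariants listed in Theorem \ref{TheoremDerInv}. That claim is plausible (for small examples the caterpillar face data $\{n,n\}$ resp.\ $\{2n\}$ does seem to single out the caterpillar among ribbon graphs with the same $|V|$ and $|E|$), but it is a genuine combinatorial classification problem over all cyclic orderings, and you give no argument for it; as written, the proof is incomplete precisely at the one point you identify as carrying "essentially all the remaining content." Moreover, this detour is unnecessary, and the paper avoids it entirely with Lemma \ref{CatLem}: deformed loops are by definition loops $\gamma$ of the quiver with $\pi(\gamma)\neq\gamma$, and by Proposition \ref{center} the number of such loops --- equivalently the number of faces of $\Gamma$ of perimeter $1$ --- is a derived invariant. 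A caterpillar quiver has no loops at all (note that it is the quiver, not the Brauer graph, that must be loop-free here; the one-vertex caterpillar's Brauer graph consists entirely of loops, so your phrase "such a graph carries no loop" needs this correction), hence no faces of perimeter $1$, hence any algebra derived equivalent to a caterpillar has no faces of perimeter $1$ and therefore no deformed loops. Since the multiset of face perimeters is already on your list of invariants, you could close your gap in one line by this observation instead of attempting to characterize caterpillars; with that substitution your proof becomes complete and coincides with the paper's.
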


In forthcoming work \cite{Gnedin}, among other results, W. Gnedin independently obtains
Corollary \ref{CorClosed} in characteristic 2 and for bipartite Brauer graphs by different methods. Note that the list of invariants from Theorem \ref{TheoremDerInv} is crucial to the forthcoming joint work \cite{OpperZvo} of S. Opper and the second named author, where a complete  classification of Brauer graph algebras up to derived equivalence will be provided.

\vspace{10pt}
\textbf{Acknowledgement:} AZ would like to thank Alexey Ananyevskiy for many fruitful discussions.

\section{Preliminaries}\label{SecPre}

Throughout this paper, $A$ is a basic, connected, finite dimensional
algebra over an algebraically closed field $\kk$ and $\rmod A$ is the
category of finite dimensional right $A$-modules. The stable category of $\rmod A$ will be denoted by $\smod A$ and $\Omega:
\smod A \rightarrow \smod A$ will denote the syzygy functor. The bounded derived category of the category $\rmod A$ will be denoted by $\Db A$. A quiver $Q$ consists of a set of vertices $Q_0$ and a set of arrows $Q_1$. The map $s: Q_1 \rightarrow Q_0$ will denote the start of an arrow, the map $e: Q_1 \rightarrow Q_0$ will denote the end of an arrow. In the path algebra $\kk Q$ the multiplication of arrows $\alpha$ and $\beta$ is $\alpha\beta\neq 0$, provided $e(\alpha)=s(\beta)$, by $e_v$ we will denote the primitive idempotent corresponding to the vertex $v\in Q_0$, by $J(A)$ we will denote the Jacobson radical of the algebra $A$, which is the ideal generated by the arrows of the quiver $Q$ in case $A\simeq \kk Q/I$. By $K_0(\mathcal{C})$ we are going to denote the Grothendieck group of an Abelian or a triangulated category $\mathcal{C}$. 

In this paper we are going to be interested in symmetric special biserial and symmetric stably biserial algebras.

\begin{opr}\label{ssb} Let $Q$ be a quiver, $I$ an admissible ideal of $\kk Q$. A
self-injective  algebra $A = \kk Q/I$ is called \textbf{special biserial} if the
following conditions are satisfied.
\begin{enumerate}
    \item For each vertex $v \in Q$, the number of outgoing arrows and the
number of incoming arrows are less than or equal to 2.
\item  For each arrow $\alpha \in Q$, there is at most one arrow $\beta
\in Q$ such that $\alpha \beta \notin I$.
\item  For each arrow $\alpha \in Q$, there is at most one arrow $\beta
\in Q$ such that $\beta\alpha  \notin I$.
\end{enumerate}
\end{opr}

\begin{opr}\label{stb} Let $Q$ be a quiver, $I$ an admissible ideal of $\kk Q$. A
self-injective  algebra $A = \kk Q/I$ is called \textbf{stably biserial} if the
following conditions are satisfied.
\begin{enumerate}
    \item For each vertex $v \in Q$, the number of outgoing arrows and the
number of incoming arrows are less than or equal to 2.
    \item For each arrow $\alpha \in Q$, there is at most one arrow $\beta
\in Q$ such that $\alpha \beta \not\in \alpha \rad (A) \beta +
\soc (A)$.
    \item For each arrow $\alpha \in Q$, there is at most one arrow $\beta
\in Q$ such that $\beta\alpha  \not\in  \beta\rad (A) \alpha +
\soc (A)$.
\end{enumerate}
\end{opr}

The following description of stably biserial algebras was provided
in \cite{AIP}:

\begin{prop}[Proposition 7.5 \cite{AIP}]\label{PropAIP} If $A = \kk Q/I$ is stably biserial then we can
choose a presentation of $A$ in such a way that the following
conditions hold.
\begin{enumerate}
    \item If $\alpha\beta \neq 0$, $\alpha\gamma \neq 0$, $\beta \neq
\gamma$, for arrows $\alpha, \beta, \gamma$ then either $\alpha\beta
\in \soc (A)$ or $\alpha\gamma \in \soc (A)$.
    \item If $\beta\alpha \neq 0$, $\gamma\alpha \neq 0$, $\beta \neq
\gamma$, for arrows $\alpha, \beta, \gamma$ then either $\beta\alpha
\in \soc (A)$ or $\gamma\alpha \in \soc (A)$.
\end{enumerate}
\end{prop}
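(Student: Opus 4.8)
The plan is to start from an arbitrary admissible presentation $A=\kk Q/I$ and to bring it into the desired form by a finite sequence of \emph{changes of presentation}: automorphisms of $\kk Q$ — equivalently of the truncation $\kk Q/J(A)^N$ for $N\gg 0$ — fixing the vertex idempotents $e_v$ and inducing the identity on $J(A)/J(A)^2$. Observe first that the conditions of Definition~\ref{stb} involve only $\rad(A)$, $\soc(A)$ and the product, hence are preserved by any such change; in particular, throughout the argument each vertex $v$ has at most two arrows $\beta,\gamma$ starting at it, and for any arrow $\alpha$ at most one arrow $\delta$ out of $e(\alpha)$ yields a product $\alpha\delta\notin\alpha\rad(A)\delta+\soc(A)$. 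Consequently the content of conclusion~(1) is this: whenever $\alpha\beta\neq0\neq\alpha\gamma$ with $\beta\neq\gamma$ and $\alpha\gamma\in\alpha\rad(A)\gamma+\soc(A)$, arrange by a change of presentation that $\alpha\gamma\in\soc(A)$. Conclusion~(2) is the left--right mirror of this (formally, conclusion~(1) for $A^{\mathrm{op}}$), and the two must be achieved for one and the same presentation.

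The basic move is immediate. Write $\alpha\gamma=\alpha r\gamma+s$ with $r\in\rad(A)$, $s\in\soc(A)$; since $\alpha e_v=\alpha$ and $e_v\gamma=\gamma$ with $v=e(\alpha)=s(\gamma)$, we may take $r\in e_v\rad(A)e_v$, and then $e_v-r$ is a unit of the finite-dimensional algebra $e_vAe_v$ with $\alpha(e_v-r)\gamma=s\in\soc(A)$. As $(e_v-r)\gamma\equiv\gamma\pmod{J(A)^2}$, replacing the arrow $\gamma$ by $(e_v-r)\gamma$ and keeping all other arrows is a legitimate change of presentation, after which $\alpha\gamma\in\soc(A)$; dually one may replace $\alpha$ by $\alpha(e_v-r)$. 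The crucial feature is that such a replacement alters a product $p\gamma$ only to $p\gamma-pr\gamma$ and a product $\gamma q$ only to $\gamma q-r\gamma q$ — that is, the affected products change only by \emph{strictly longer} path terms.

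The real work is to perform all these moves without ever undoing one already made, including across the (1)/(2) divide. The proposed organization is a well-founded induction in which shorter relations are repaired first: since a repair perturbs the affected products only by strictly longer terms, the ``short part'' of every product is untouched, so a shorter relation, once brought into the form demanded by~(1) or~(2), stays there. What remains to be controlled is that the longer perturbation terms $pr\gamma$ do not themselves force fresh short-range violations; here one uses the uniqueness of the essential continuation of each arrow (a consequence of Definition~\ref{stb}): it forces $\alpha r$ in $\alpha\gamma=\alpha r\gamma+s$ to be, modulo still longer terms, a scalar multiple of the unique essential string out of $\alpha$, which both identifies the products that can be disturbed and shows the disturbances disappear after finitely many further (strictly longer) repairs — finiteness coming from $\dim_\kk A<\infty$. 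The same bookkeeping reconciles the repairs for~(1) with those for~(2): each modifies arrows only by radical multiples on one prescribed side, a product already in $\soc(A)$ is annihilated by $J(A)$, and the residual interactions are again controlled by the strictly-longer-terms principle. I expect precisely this termination-and-compatibility analysis, rather than any individual step, to be the main obstacle.
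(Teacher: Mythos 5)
The paper itself does not prove this statement: it is quoted verbatim from \cite{AIP} (Proposition 7.5 there), so there is no in-paper argument to measure yours against. Your basic move is the right one --- writing $\alpha\gamma=\alpha r\gamma+s$ with $r\in e_v\rad(A)e_v$, $s\in\soc(A)$, and replacing $\gamma$ by the unit multiple $(e_v-r)\gamma$ --- and your supporting observations (such a replacement is a legitimate change of presentation, it preserves essentiality and non-essentiality of the other products, and $\soc(A)$ is a two-sided ideal killed by $J(A)$ on both sides) are correct and necessary.

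The gap sits exactly where you predict it: the termination-and-compatibility analysis is asserted rather than carried out, and the principles you invoke do not close it. The ``strictly longer terms'' principle does not help here, because every product to be repaired is a length-two product and membership in $\soc(A)$ is not a condition on a leading term; and ``$\soc(A)$ is annihilated by $J(A)$'' protects a repaired product $\alpha\gamma\in\soc(A)$ only against perturbations of the form $J(A)\cdot(\alpha\gamma)$ or $(\alpha\gamma)\cdot J(A)$, whereas a second modification of one of its two factors --- say $\gamma\mapsto(e_v-r')\gamma$ performed to repair $\alpha''\gamma$, or $\alpha\mapsto\alpha(e_v-r')$ performed to repair $\alpha\gamma''$ --- perturbs $\alpha\gamma$ by $\alpha r'\gamma$, a product of three radical elements that need not lie in $\soc(A)$. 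What is missing is the structural lemma that rules such collisions out: if $\alpha\gamma\neq0$ is non-essential and not already in $\soc(A)$, then expanding $r\in e_v\rad(A)e_v$ into closed paths at $v$ (each beginning with an outgoing and ending with an incoming arrow of $v$) yields $\alpha\gamma\in\alpha\delta A+\soc(A)$ and $\alpha\gamma\in A\alpha'\gamma+\soc(A)$ for the second outgoing arrow $\delta$ and the second incoming arrow $\alpha'$ at $v$; iterating this and using the nilpotency of $J(A)$ forces both $\alpha\delta$ and $\alpha'\gamma$ to be essential (in particular such arrows must exist). Consequently the pairs needing repair at any vertex are pairwise disjoint, each arrow needs to be modified at most once and always on the same prescribed side (say, left multiplication on the outgoing member of its pair), and then any later modification perturbs an already repaired product only by an element of $J(A)\cdot\soc(A)=0$. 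Without this lemma and the resulting one-sided discipline, your procedure carries no guarantee of terminating in a presentation satisfying (1) and (2) simultaneously.
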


Self-injective special biserial algebras are a subclass of stably biserial algebras. We will call an algebra \textbf{symmetric special biserial} (SSB for short) or \textbf{symmetric stably biserial}, if in addition to being special biserial or stably biserial it is symmetric. 

Consider the following set-up:

\begin{enumerate}
\item A quiver $Q$ such that every vertex has two incoming and two outgoing arrows.
\item A permutation $\pi$ on $Q_1$ with $e(\alpha)=s(\pi(\alpha))$ for all $\alpha\in Q_1$.
\item A function $m:C(\pi)\to \mathbb{N}$, where $C(\pi)$ is the set of cycles of $\pi$ and $\mathbb{N}$ denotes the set of natural numbers without zero. We will denote by $C(\alpha):=\alpha\pi(\alpha)\pi^2(\alpha)\dots\pi^{|C( \alpha)|-1}$ the cycle, containing $\alpha\in Q_1$ and call $m(C(\alpha))$  the \textbf{multiplicity} of the cycle $C(\alpha)$.
\item A set $\mathcal{L}=\{\alpha_{i_1},\dots,\alpha_{i_d}\}$ of loops, such that $\pi(\alpha_{i_j})\neq \alpha_{i_j}$ and a set of elements $\{t_{\alpha_{i_1}},\dots,t_{\alpha_{i_d}}\},$ with $t_{\alpha_{i_j}} \in \kk^*$.
\end{enumerate}

In \cite{AZ1} the following description of symmetric stably biserial algebras in terms of generators and relations was obtained:

\begin{thm}\label{TheoremDescriptionOfStablyBi}
Any symmetric stably biserial algebra has a presentation $A=\kk Q/I$, where $Q$ is a quiver as in the set-up above and the ideal of relations $I$ is generated by

\begin{enumerate}
\item  $\alpha\beta$ for all $\alpha,\beta \in Q_1$,
$\beta\neq\pi(\alpha)$, $\alpha\notin\mathcal{L}$,
\item $C(\alpha)^{m(C(\alpha))}-C(\beta)^{m(C(\beta))}$ for all $\alpha,\beta\in Q_1$ with $s(\alpha)=s(\beta)$,
\item $\alpha^2-t_{\alpha}C(\alpha)^{m(C(\alpha))}$ for each $\alpha\in \mathcal{L}$,
\item $C(\alpha)^{m(C(\alpha))}\beta$ for all $\alpha,\beta\in Q_1$.
\end{enumerate}

\noindent Moreover, any symmetric stably biserial algebra over an algebraically
closed field $\kk$ with $char(\kk)\neq 2$ is isomorphic to an algebra $\kk Q/I$ as above with   $\mathcal{L}=\emptyset$.
\end{thm}

\begin{rem}
Note that the relations of the form (2) appearing in Theorem \ref{TheoremDescriptionOfStablyBi} are not admissible in the case where there is a cycle of $\pi$ consisting of one arrow and having multiplicity $1$. In \cite{AZ1} we considered quivers $Q$ such that every vertex has either two incoming and two outgoing arrows or one incoming and one outgoing arrow, and an admissible ideal of relations $I$. To pass to this equivalent description from the description in Theorem \ref{TheoremDescriptionOfStablyBi} one needs to delete loops $\alpha$ such that $\pi(\alpha)=\alpha,$ $m(\alpha)=1$ and modify the ideal of relations accordingly. In the case where the algebra has only two loops $\alpha_i$ such that $\pi(\alpha_i)=\alpha_i,$ $m(\alpha_i)=1$, one needs to delete only one loop to get the algebra isomorphic to $\kk[x]/(x^2)$. 

Finally, note that in case $\mathcal{L}=\emptyset$, the algebras appearing in Theorem \ref{TheoremDescriptionOfStablyBi} are symmetric special biserial independent of the characteristic of the field by construction.
\end{rem}

The loops from the set $\mathcal{L}$ will be called \textbf{deformed loops}. It is well known that any SSB-algebra can be given in the above form with the empty set of deformed loops.

Note also that for the description of stably biserial algebras from Theorem \ref{TheoremDescriptionOfStablyBi}  the number of arrows 
$|Q_1|=2|Q_0|$ is invariant under derived equivalence, since $|Q_0|$ is the rank of  $K_0(\Db A)$.

By \cite{Rog, AntGen, Schr1} the class of SSB-algebras coincide with the class of Brauer graph algebras. \textbf{Brauer graph} is a graph with a cyclic ordering of (half-)edges around each vertex and a number assigned to each vertex. This graph $\Gamma$ can be constructed using the data $(Q, \pi, m)$ as follows: the vertices of $\Gamma$ correspond to the cycles of $\pi$, the edges of $\Gamma$ correspond to the vertices of $Q$, an edge connects two vertices of $\Gamma$ if the corresponding $\pi$-cycles have the corresponding vertex of $Q$ in common. The cyclic ordering of edges around a vertex comes from the order in which vertices of $Q$ appear in the $\pi$-cycle, the multiplicities come from the function $m$. Along the same lines, to each Brauer graph one can assign the data $(Q,\pi,m)$ and the corresponding SSB-algebra.

In \cite{AntDer} each Brauer graph was considered together with a minimal compact oriented surface $\mathcal{S}$, into which it is embedded, in such a way that its complement is a union of disks (see also \cite{MaSchr}). The ordering of the edges around the vertices of $\Gamma$ comes from the orientation of the surface (we will assume that the edges are ordered clockwise if the graph $\Gamma$ is drawn on the plane, representing part of a sphere). Now it makes sense to consider not only vertices and edges of $\Gamma$ but also \textbf{faces} of $\Gamma$.
Using the edges of the graph $\Gamma$ the surface $\mathcal{S}$ can be cut into polygons, by a \textbf{perimeter} of a face $F$ we mean the number of edges in the corresponding polygon, thus, for example, the perimeter of a self-folded triangle is 3.
The set  $\mathcal{L}$ corresponds to a subset of faces of perimeter 1 of $\Gamma$.  We will use the terms SSB-algebra and Brauer graph algebra interchangeably. 

From the discussion above it follows that to any symmetric stably biserial algebra given as a path algebra of a quiver with relations as in Theorem \ref{TheoremDescriptionOfStablyBi} we can associate a Brauer graph $\Gamma$ together with a subset of its faces of perimeter $1$. We will prove in Lemma \ref{LemBrauer} below that, with one exception, $\Gamma$ does not depend on the choice of the presentation of the algebra.

\begin{ex}\label{ExRef}
Let us consider the Brauer graph $\Gamma$ and the corresponding quiver $Q$
\begin{center}
\begin{tikzpicture}
\node (v1) at (-4,1) {$1$};
\node (v2) at (-1.5,1) {$2$};

\draw[<-]  (v1) edge[bend right=30] (v2);

\draw[<-]  (v2) edge[bend right=30] (v1);

\node at (-3.1,1.1) {$\alpha$};
\node at (-2.35,0.85) {$\beta$};
\node at (-0.05,1) {$\gamma$};
\node at (-4.5,1) {$Q:$};

\node (v5) at (-10.1,1.3) {$1$};
\node (v6) at (-7.5,1) {$2$};

\node (v4) at (-11.85,1) {$\Gamma:$};
\node (v3) at (-8.85,1) {$\cdot$};
\node (v4) at (-11.35,1) {$\cdot$};
\draw[-]  (v3) edge (v4);
\draw[-]  plot[smooth, tension=.7] coordinates {(-8.7,0.85) (-7.85,0.7) (-7.85,1.3) (-8.7,1.15)};

\draw[<-]  plot[smooth, tension=.7] coordinates {(-1.2,0.85) (-0.35,0.7) (-0.35,1.3) (-1.2,1.15)};

\end{tikzpicture}
\end{center}
the multiplicities of both vertices are assumed to be $1$. By construction we get $\pi(\alpha)=\gamma,$ $\pi(\gamma)=\beta,$ and $\pi(\beta)=\alpha$. To that Brauer graph we can associate a Brauer graph algebra $A=\kk Q/I$, where $I$ is the ideal generated by relations $$\alpha\beta=0, \gamma^2=0, \beta\alpha\gamma=\gamma\beta\alpha \text{ and } \alpha\gamma\beta\alpha=0.$$ On the other hand, we can consider the symmetric stably biserial algebra $A_{def}=\kk Q/I'$, where $I'$ is the ideal generated by relations $$\alpha\beta=0, \gamma^2=\beta\alpha\gamma=\gamma\beta\alpha \text{ and } \alpha\gamma\beta\alpha=0.$$ This corresponds to deforming the loop $\gamma$, associated to the face of $\Gamma$ of perimeter $1$, and taking $t_\gamma=1$. The fact that the algebra $A_{def}$ is indeed symmetric stably biserial will follow from Proposition \ref{PropSym}.

In case, $char(\kk)\neq 2$ one can use the following change of basis $\alpha'=\alpha,$ $\beta'=\beta$ and  $\gamma'=\gamma-\frac{\beta\alpha}{2}$ of the algebra $A_{def}$ to see that $A_{def}\simeq A$. In case, $char(\kk)= 2$ the algebras $A_{def}$ and $A$ are not isomorphic and are not even derived equivalent, which will become apparent from Theorem \ref{TheoremRank}. 
\end{ex}

\section{Stably biserial algebras}\label{NNN}

In this section we are going to investigate basic properties of symmetric stably biserial algebras. As stated in Theorem \ref{TheoremDescriptionOfStablyBi}, any symmetric stably biserial algebra $A$ can be given as a certain deformation of a Brauer graph algebra with a Brauer graph $\Gamma$. We are going to show that with one exception the Brauer graph $\Gamma$ does not depend on the presentation of $A$ and that any deformation from Theorem \ref{TheoremDescriptionOfStablyBi} is indeed symmetric.

Let us introduce a special class of algebras, called caterpillar in this paper. This class of algebras behaves differently from other symmetric special biserial algebras and has to be excluded from some considerations.

The algebra $\kk Q/I$ will be called a \textbf{caterpillar} of length $n> 1$ if $Q$ is of the form 
\begin{center}
\begin{tikzpicture}

\node (v1) at (-3,0) {1};
\node (v2) at (-2,1) {2};
\node (v3) at (-1,1) {3};
\node (v4) at (1,1) {n-1};
\node (v5) at (2,0) {n.};

\draw[->]  (v1) edge[bend right=10] (v2);
\draw[->]  (v1) edge[bend left=10] (v2);

\draw[->]  (v2) edge[bend right=10] (v3);
\draw[->]  (v2) edge[bend left=10] (v3);

\draw  (v3) edge[dashed] (v4);

\draw[->]  (v4) edge[bend right=10] (v5);
\draw[->]  (v4) edge[bend left=10] (v5);

\draw[->]  (v5) edge[bend right=5] (v1);
\draw[->]  (v5) edge[bend left=5] (v1);
\node at (-2.8,0.7) {$\alpha$};
\node at (-1.5,1.3) {$\alpha$};
\node at (1.8,0.7) {$\alpha$};
\node at (-0.5,-0.4) {$\alpha$};
\node at (-2.1,0.4) {$\beta$};
\node at (-1.5,0.6) {$\beta$};
\node at (1.1,0.4) {$\beta$};
\node at (-0.5,0.4) {$\beta$};
\end{tikzpicture}
\end{center}
In this case, the ideal of relations can be either of the form $I_1$ or of the form $I_2$. The ideal $I_1$ is generated by relations $\alpha e_i \beta=0=\beta e_i\alpha,$ for  $i\neq 1$, $\alpha e_1 \alpha=0=\beta e_1\beta$, $(\alpha^{k}e_1 \beta^n \alpha^{n-k}) ^{m_{\alpha}}=(\beta^{k}e_1 \alpha^n \beta^{n-k}) ^{m_{\alpha}}$, thus there is one $\pi$-cycle with multiplicity $m_{\alpha}$. The ideal $I_2$ is generated by relations $\alpha\beta=0=\beta\alpha,$ $\alpha^{nm_{\alpha}}=\beta^{nm_{\beta}}$, thus there are two $\pi$-cycles with multiplicities $m_{\alpha}$ and $m_{\beta}$. The Brauer graphs of these algebras are:
\begin{center}
\begin{tikzpicture}[scale=0.7]

\node (v1) at (0.5,-0.5) {$\bullet$};
\draw  plot[smooth, tension=.7] coordinates {(v1) (2.5,1) (0.5,1.5) (0.5,-0.5)};
\draw  plot[smooth, tension=.7] coordinates {(0.5,-0.5) (2,1) (0,1.5) (0.5,-0.5)};
\node at (1.5,1) {$\cdots$};
\draw  plot[smooth, tension=.7] coordinates {(0.5,-0.5) (1,1) (-1,1.5) (0.5,-0.5)};
\node (v3) at (6.5,-0.5) {$\bullet$};
\node (v2) at (7.5,-0.5) {$\bullet$};
\draw  plot[smooth, tension=.7] coordinates {(v2) (9,1) (7.5,1.5) (v3)};
\draw  plot[smooth, tension=.7] coordinates {(7.5,-0.5) (8.5,1) (7,1.5) (6.5,-0.5)};
\node at (8,1) {$\cdots$};
\draw  plot[smooth, tension=.7] coordinates {(7.5,-0.5) (7.5,1) (6,1.5) (6.5,-0.5)};
\end{tikzpicture}
\end{center}

The following Lemma is most likely known for Brauer graph algebras, but we could not find a proof in the literature, so we include it for the larger class of symmetric stably biserial algebras.

\begin{lem}\label{LemBrauer}
Let $A$ be a symmetric stably biserial algebra with a presentation $\kk Q/I$ as in Theorem \ref{TheoremDescriptionOfStablyBi} and the associated Brauer graph $\Gamma$. If $\Gamma$ is not a loop with 1 as the multiplicity of the unique vertex, or an edge with 2 as the multiplicity of both vertices, then $\Gamma$ does not depend on the choice of the presentation $\kk Q/I$.  
\end{lem}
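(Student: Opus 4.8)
The plan is to show that the combinatorial data $(Q,\pi,m)$ underlying a presentation as in Theorem~\ref{TheoremDescriptionOfStablyBi} is essentially determined by the algebra $A$, since the Brauer graph $\Gamma$ is built canonically from $(Q,\pi,m)$. The quiver $Q$ is of course an invariant of $A$ (it is the Gabriel quiver, read off from $J(A)/J(A)^2$), so the whole issue is to recover the permutation $\pi$ and the multiplicity function $m$. I would first treat $\pi$. Fix an arrow $\alpha\in Q_1$. In the presentation of Theorem~\ref{TheoremDescriptionOfStablyBi}, among the (at most two) arrows $\beta$ with $e(\alpha)=s(\beta)$, the relation-type (1) kills $\alpha\beta$ for $\beta\neq\pi(\alpha)$ unless $\alpha$ is a deformed loop; so for $\alpha\notin\mathcal L$ the successor $\pi(\alpha)$ is characterized intrinsically as the unique $\beta$ with $\alpha\beta\notin I$, equivalently $\alpha\beta\neq 0$ in $A$. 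For a deformed loop $\alpha$ we instead have $\alpha^2\neq 0$ and $\alpha\pi(\alpha)\neq 0$, and here the two candidate successors are $\alpha$ itself and one other arrow; the point is that $\pi(\alpha)\neq\alpha$ by the defining condition on $\mathcal L$, so $\pi(\alpha)$ is again the unique successor $\beta\neq\alpha$ with $\alpha\beta\neq 0$. The subtlety is to verify that ``$\alpha\beta\neq 0$ in $A$'' is really presentation-independent — it is, because it only refers to multiplication in the fixed algebra $A$ — but one must make sure that one is comparing like with like, i.e. that the Gabriel quivers of two presentations are identified compatibly; this is automatic up to the (irrelevant) choice of representatives of arrows modulo $J^2$.

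Having recovered $\pi$, I would recover the multiplicity $m(C)$ of each $\pi$-cycle $C$. Pick a vertex $v$ of $Q$ lying on the cycle $C$ with $C=C(\alpha)$, $s(\alpha)=v$. The element $p_\alpha:=(\alpha\pi(\alpha)\cdots\pi^{|C(\alpha)|-1}(\alpha))^{m(C(\alpha))}$ spans the socle of $e_vA$, and more precisely the number $m(C(\alpha))\cdot|C(\alpha)|$ is the length of the longest nonzero path starting with $\alpha$ and cycling through $C$; since $|C(\alpha)|$ is already known from $\pi$, this recovers $m(C(\alpha))$. Concretely, $m(C(\alpha))$ is determined by the condition that $(\alpha\pi(\alpha)\cdots)^{m}\neq 0$ but $(\alpha\pi(\alpha)\cdots)^{m}\cdot(\text{next arrow})=0$, which is again a statement purely about $A$. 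One has to be a little careful at vertices lying on a deformed loop, where the socle can also be hit via $\alpha^2$; but relation~(3) says $\alpha^2$ is a nonzero scalar multiple of $p_\alpha$, so this does not change the length count. Thus both $\pi$ and $m$, hence $\Gamma$ together with its cyclic orderings and multiplicities, are recovered from $A$.

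**Main obstacle and the role of the excluded cases.** The genuine difficulty — and the reason for the two exceptions — is that the argument above silently assumes one can distinguish, at a vertex $v$, the "honest" relation structure from degenerate coincidences. When $\Gamma$ is a single loop with multiplicity $1$, the algebra is $\kk[x]/(x^2)$ (a single vertex, all paths of length $\geq 2$ zero), and here the quiver has a vertex with one incoming and one outgoing arrow; the data $(Q,\pi,m)$ reconstructing it is not unique because the "loop with $\pi(\alpha)=\alpha$, $m=1$" can be deleted (cf.\ the Remark after Theorem~\ref{TheoremDescriptionOfStablyBi}), so $\Gamma$ is genuinely ambiguous. Similarly, the edge with both multiplicities $2$ gives $\kk[x]/(x^2)$ after the reduction described in that Remark, so again the graph is not well-defined. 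Away from these cases, every vertex of $Q$ has exactly two in- and two out-arrows and $\dim e_vA\geq 3$, which is what makes the length/socle arguments above unambiguous. So the proof structure I would write is: (i) recall $Q$ is the Gabriel quiver, hence fixed; (ii) show $\pi$ is recovered as the "nonzero-product successor" permutation, handling deformed loops separately; (iii) show $m$ is recovered from path lengths / socle, handling deformed-loop vertices separately; (iv) observe that the only obstructions to (ii)–(iii) occur exactly when $A\cong\kk[x]/(x^2)$, which is precisely when $\Gamma$ is the loop-with-multiplicity-$1$ or the edge-with-multiplicities-$2$; (v) conclude that outside these cases $\Gamma$, with its embedding data, is determined by $A$. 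I expect step (iii) — pinning down $m$ uniformly, especially checking that deformed loops and the degenerate two-simple cases do not sabotage the length count — to be the part requiring the most care.
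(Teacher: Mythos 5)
There is a genuine gap, and it sits exactly at the point you flag as a ``subtlety'' and then wave away. Your recovery of $\pi$ as the ``unique successor $\beta$ with $\alpha\beta\neq 0$'' is \emph{not} presentation-independent: an isomorphism between two standard presentations $\kk Q/I\cong\kk Q'/I'$ identifies an arrow of $Q$ only with a linear combination of parallel arrows of $Q'$ plus terms in $J(A)^2$, and when $Q$ has parallel arrows this can change which products of arrows vanish. This is not a hypothetical worry: the caterpillar quiver (which is \emph{not} excluded by the lemma) carries two ideals $I_1$ and $I_2$ giving different permutations $\pi$ (one $\pi$-cycle versus two), hence different Brauer graphs, on the same quiver. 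No argument at the level of products of arrows, radical series, or socle lengths distinguishes these two candidate graphs; the paper has to invoke Proposition \ref{cycles} (invariance of $|V(\Gamma)|$, proved via the rank of the Cartan matrix and the stable Grothendieck group) to show the two resulting algebras are not isomorphic. Your step (ii) silently assumes this away, so your proof does not go through for any algebra whose quiver has parallel arrows — precisely the hard case. The paper avoids the issue entirely by working with manifestly presentation-independent module-theoretic data (the radical series of the uniserial summands $M_i,N_i$ of $\rad P_i/\soc P_i$) and isolating the one configuration where this data is ambiguous.

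A second, smaller error: you misidentify the excluded cases. The algebra $\kk[x]/(x^2)$ corresponds to an edge with both multiplicities $1$; it is \emph{not} excluded, and the paper handles it explicitly. The actual exclusions are the $4$-dimensional local algebras $A_{2,2}=\kk[x,y]/\langle xy,\,x^2-y^2\rangle$ (edge with multiplicities $2,2$) and $B_{t_x,t_y}$ (loop with multiplicity $1$), which can be isomorphic to each other while carrying genuinely different Brauer graphs. So your step (iv) — ``the only obstructions occur when $A\cong\kk[x]/(x^2)$'' — is false on both counts: it excludes a case that is fine and fails to explain the cases that are actually excluded. The $|Q_0|=1$ situation needs the separate commutativity/dimension analysis the paper carries out.
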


\begin{proof}
Let $A$ be a symmetric stably biserial algebra with a presentation $\kk Q/I$ as in Theorem \ref{TheoremDescriptionOfStablyBi} and the associated Brauer graph $\Gamma$. The structure of projective modules over $A$ can be easily deduced from the Brauer graph $\Gamma$ of $\kk Q/I$. We will show that the structure of the Brauer graph $\Gamma$ can be deduced from the structure of projective modules in the category $\rmod A$ and thus does not depend on the choice of the presentation $\kk Q/I$.

Assume that the algebra $A$ has two presentations $\kk Q/I \simeq \kk Q'/I'$ as in Theorem \ref{TheoremDescriptionOfStablyBi}. Let us delete loops  $\alpha$ such that $\pi(\alpha)=\alpha,$ $m(\alpha)=1$ and modify the ideal of relations accordingly in both presentations, we still have an isomorphism between the modified presentations, this isomorphism induces an equivalence between the categories $\rmod \kk Q/I$ and $\rmod \kk Q'/I'$. The deleted loops correspond to the leaves with multiplicity 1 in the Brauer graph and can always be reconstructed from the valency of the vertices in the quiver. Since the ideals of relations $I$ and $I'$ are admissible after the deletion of extra loops, we can assume that $Q$ and $Q'$ coincide, thus there is a bijection between primitive idempotents for these two presentations and between simple modules over $\kk Q/I$ and $\kk Q'/I'$. This bijection between the simple modules coincides with the bijection induces by the equivalence of the categories of modules. Simple modules identified under the bijection will be denoted by $S_i$. This bijection extends to a bijection between the edges of the Brauer graphs $\Gamma$ and $\Gamma'$, constructed from these two presentations, since the edges of the Brauer graph correspond to simple modules.

Next we will reconstruct the Brauer graphs $\Gamma$ and $\Gamma'$ from the module categories $\rmod \kk Q/I$ and $\rmod \kk Q'/I'$ and deduce that the graphs coincide, since the categories are equivalent. Let us consider $\kk Q/I$, $\kk Q'/I'$ is analogous. For the projective cover $P_i$ of $S_i$ we can consider the module $\rad P_i/\soc P_i$ which has either one or two indecomposable summands $M_i$ and $N_i$. These modules are uniserial and each of them gives a unique sequence of simple modules, corresponding to it's radical series $(S_{i_1}, \cdots,S_{i_n})$, where $S_{i_1}$ is the top of $M_i$ or $N_i$ respectively.  Adding $S_i$ to this sequence $(S_{i_0}=S_i, S_{i_1}, \cdots,S_{i_n})$ and numbering the sequence by the elements of $\mathbb{Z}/(n+1)\mathbb{Z}$ we get a collection of cycles of simple modules (coming from each $P_i$ for all $i$'s), which we identify up to a cyclic permutation of $\mathbb{Z}/(n+1)\mathbb{Z}$. If for some $P_i$ the modules $M_i$ and $N_i$ are both zero, then $A\simeq \kk[x]/(x^2)$. The radical series of the modules $M_i$, $N_i$ do not depend on the presentation of the algebra, so in this case the Brauer graph is determined uniquely and is an edge with both vertices of multiplicity 1.

Note that by construction of the permutation $\pi$ the cyclic ordering of the simples in the sequences constructed above coincides with the cyclic ordering of edges in the Brauer graph. 

If the module $S_i$ appears in two different cyclic sequences, then the edge, corresponding to $S_i$ is not a loop and we can reconstruct the cyclic ordering around the ends of the edge, corresponding to $S_i$ from the subsequence of the form $(S_i, S_{i_1}, \cdots,S_{i_l}, S_i)$, where $(S_{i_1}, \cdots,S_{i_l})$ does not contain $S_i$. The multiplicities of the vertices is the number of times the subsequences $(S_i, S_{i_1}, \cdots,S_{i_l})$ has to be repeated to get the whole sequences. 

If $S_i$ appears in only one cyclic sequence, but this cyclic sequence has a subsequence of the form $\sigma=(S_i, S_{i_1}, \cdots,S_{i_l}, S_i,S_{i_{l+2}}, \cdots,S_{i_m}, S_i)$, where the subsequences $(S_{i_1}, \cdots,S_{i_l})$ and $(S_{i_{l+2}}, \cdots,S_{i_m})$ do not contain $S_i$, are different and at least one of them is not empty, then the edge corresponding to $S_i$ is a loop and we can reconstruct the cyclic ordering of the edges around the vertex adjacent to this loop and the multiplicity is the number of times the subsequence $(S_i, S_{i_1}, \cdots,S_{i_l}, S_i,S_{i_{l+2}}, \cdots,S_{i_m})$ has to be repeated to get the whole sequence.

If $S_i$ appears in only one cyclic sequence and this sequence does not have a subsequence of the form $\sigma$, and the projective module $P_i$ is uniserial then we can reconstruct the cyclic ordering of the edges around one vertex incident to the edge corresponding to $S_i$ and its multiplicity as before, the other end of this edge has no other edges incident to it and has multiplicity 1.

The only case left to consider is when $S_i$ appears in only one cyclic sequence and this sequence does not have a subsequence of the form $\sigma$, but the projective module $P_i$ is not uniserial. In this case the modules $M_i$ and $N_i$ have the same radical series but are both nonzero. If the cyclic sequence containing $S_i$ is of the form $(S_i, S_{i_1}, \cdots,S_{i_l}, S_i)$, where $(S_{i_1}, \cdots,S_{i_l})$ does not contain $S_i$, then the edge, corresponding to $S_i$ is not a loop and we can reconstruct the cyclic ordering around each end of this edge, the multiplicities of the ends are 1. Assume that  $(S_i, S_{i_1}, \cdots,S_{i_l}, S_i)$, where $(S_{i_1}, \cdots,S_{i_l})$ does not contain $S_i$ is a subsequence of the cyclic sequence and it has to be repeated $m>1$ times to get the whole sequence. If $(S_{i_1}, \cdots,S_{i_l})$ is empty, then $|Q_0|=1$, this situation will be considered later. If the edge corresponding to $S_i$ is a loop, then all edges corresponding to $(S_{i_1}, \cdots,S_{i_l})$ are loops and we get a caterpillar with one vertex in the Brauer graph with multiplicity $m/2$ (this can happen only for even $m$). If the edge corresponding to $S_i$ is not a loop, then all edges corresponding to $(S_{i_1}, \cdots,S_{i_l})$ are not loops and we get a caterpillar with two vertices in the Brauer graph, both with multiplicity $m$. The two algebras we get for even $m$ are not isomorphic, since they are not even derived equivalent by Proposition \ref{cycles}. Note that the proof of Proposition \ref{cycles} does not rely on the results of this section.

Let us consider the case $|Q_0|=1$. The Brauer graph is either an edge or a loop. If it is an edge, there are no deformed loops and $A\simeq A_{k,l}=\kk[x,y]/\langle xy, x^k-y^l\rangle$, $k,l\geq 1$, which is a commutative algebra. If it is a loop, then for multiplicity greater than one, $A$ is non-commutative. So it is sufficient to consider the algebra 
$B_{t_x,t_y}=\kk[x,y]/\langle x^2y,y^2x, x^2-t_x xy, y^2-t_y xy \rangle$, which is $4$-dimensional. If it is isomorphic to $A_{k,l}$, then either $k=1, l=3$, which is not possible, or $k=l=2$. In the last case the algebras can, indeed, be isomorphic, even when $t_x=t_y=0$, $char(\kk)\neq 2$. 
\end{proof}

\begin{rem}
The cyclic ordering of edges in the Brauer graph played an important role in the proof of Theorem \ref{TheoremDescriptionOfStablyBi}. Namely, for a symmetric stably biserial algebra $A$ with an arbitrary presentation as in Proposition \ref{PropAIP}, with an admissible ideal of relations, we first fixed the permutation $\pi$ and then using the change of basis produced a presentation as in Theorem \ref{TheoremDescriptionOfStablyBi}. We would like to note here that the change of basis from \cite[Lemma 10]{AZ1} does not work for the algebras $A_t$ and $B_{t,s}$ (see below), which was not noted in the proof of Lemma 10. This does not effect the result, since these algebras turn out not to be symmetric. For the algebra $A_t$ the element $\alpha-t\beta$ belongs to the socle of $A_t$, for the algebra $B_{t,s}$ the element $\gamma_0-s\gamma_1$ belongs to the socle of $B_{t,s}$, which is not possible for symmetric algebras. Here 
\end{rem}

\begin{center}
    \begin{tikzpicture}
\node (v1) at (-4,1) {1};
\node (v2) at (-1.5,1) {2};

\draw[->]  (v1) edge[bend right=30] (v2);
\draw[->]  (v1) edge[bend right=40] (v2);

\draw[->]  (v2) edge[bend right=30] (v1);
\draw[->]  (v2) edge[bend right=40] (v1);
\node at (-3.2,1.75) {$\beta_1$};
\node at (-3.2,1.1) {$\beta_0$};
\node at (-2.25,0.85) {$\gamma_0$};
\node at (-2.25,0.3) {$\gamma_1$};
\node at (-4.5,1) {$Q:$};
\node at (-3,-0.5) {$I=\langle J(\kk Q)^3 , \gamma_0\beta_0-\gamma_1\beta_1, $};

\node at (-3,-1) {$\beta_0\gamma_0-\beta_1\gamma_1, \beta_0\gamma_1-t\beta_0\gamma_0, $};

\node at (-3,-1.5) {$\gamma_1\beta_0-t\gamma_1\beta_1, \beta_1\gamma_0-s\beta_1\gamma_1, $};

\node at (-3,-2) {$\gamma_0\beta_1-s\gamma_0\beta_0 \rangle, st=1$};

\node at (-3,-2.5) {$B_{t,s}=\kk Q/I, \pi(\gamma_i)=\beta_i, \pi(\beta_i)=\gamma_i$};
\node at (-10.35,1) {$Q:$};

\node (v3) at (-8.85,1) {1};

\draw[->]  plot[smooth, tension=.7] coordinates {(-8.7,0.85) (-7.85,0.7) (-7.85,1.3) (-8.7,1.15)};
\draw[->]  plot[smooth, tension=.7] coordinates {(-9,0.85) (-9.85,0.7) (-9.85,1.3) (-9,1.15)};
\node at (-9.1,-0.5) {$I=\langle J(\kk Q)^3 , \alpha^2-\beta^2, $};
\node at (-9.1,-1) {$\alpha\beta- t\alpha^2, \beta\alpha-t\beta^2\rangle, t^2=1$};
\node at (-9.1,-1.5) {$A_{t}=\kk Q/I, \pi(\alpha)=\beta, \pi(\beta)=\alpha$};
\node at (-9.35,1.5) {$\alpha$};
\node at (-8.35,1.5) {$\beta$};
\end{tikzpicture}
\end{center}

Let us denote by $A_\infty$ an algebra isomorphic to any symmetric stably biserial algebras with one vertex, two loops and one $\pi$-cycle of multiplicity $1$.

\begin{prop}\label{PropSym}
Let us consider any data of the form $(Q,\pi,m,\mathcal{L},\{t_\alpha\}_{\alpha\in\mathcal{L}})$, and $A\simeq \kk Q/I$, where $I$ is the ideal of relations described in Theorem \ref{TheoremDescriptionOfStablyBi}. If $A$ is not isomorphic to $A_\infty$ with both loops deformed, then the algebra $A$ is symmetric. 
\end{prop}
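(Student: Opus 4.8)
The goal is to show that the algebra $A=\kk Q/I$ defined by the relations in Theorem~\ref{TheoremDescriptionOfStablyBi} carries a symmetrizing linear form, i.e.\ a $\kk$-linear map $\lambda\colon A\to\kk$ with $\lambda(ab)=\lambda(ba)$ for all $a,b$ and with $\ker\lambda$ containing no nonzero one-sided ideal. The natural strategy is to exhibit $\lambda$ explicitly on the basis of $A$ coming from the presentation and then verify the two defining properties. First I would record an explicit $\kk$-basis of $A$: by the shape of the relations, each projective $P_i=e_iA$ has a basis consisting of the paths $e_i$, the initial subpaths of the two maximal paths $C(\alpha)^{m(C(\alpha))}$ and $C(\beta)^{m(C(\beta))}$ starting at $i$ (these agree in their common socle element), together with, in the deformed-loop case, the extra path $\alpha$ sitting ``between'' $e_i$ and the rest via $\alpha^2=t_\alpha C(\alpha)^{m(C(\alpha))}$. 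The socle of each $P_i$ is one-dimensional, spanned by $z_i:=C(\alpha)^{m(C(\alpha))}=C(\beta)^{m(C(\beta))}$.

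\textbf{Defining $\lambda$ and checking it is symmetrizing.} Define $\lambda$ to vanish on all basis paths except the socle generators $z_i$, and set $\lambda(z_i)=1$ for each vertex $i$. Equivalently, $\lambda(a)$ reads off the coefficient of $\sum_i z_i$ when $a$ is written in the chosen basis. The non-degeneracy condition is then immediate: if $0\neq a\in A$, pick a vertex $i$ with $e_iae_j\neq 0$ for some $j$; multiplying $a$ on the left and right by suitable paths one lands in the socle $\kk z_i$ (every nonzero element of $e_iAe_j$ can be completed to the socle, because $P_i$ is biserial with simple socle), so $\lambda$ does not annihilate any nonzero left or right ideal. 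The content is the symmetry $\lambda(ab)=\lambda(ba)$, which it suffices to check on pairs of basis paths $p,q$; both $pq$ and $qp$ are then (scalar multiples of) paths, and $\lambda$ is nonzero only when that path is a full socle cycle $z_i$. So the claim reduces to: whenever $pq$ is a nonzero scalar multiple of some $z_i$, the product $qp$ is defined and equals the same scalar multiple of $z_{s(q)}$, and conversely. For products not involving a deformed loop this is the standard cyclic-symmetry computation for Brauer graph algebras — $pq\in\soc$ forces $q$ to be exactly the ``complementary'' path to $p$ in the relevant $\pi$-cycle, and rotating the cycle shows $qp$ is the corresponding socle element with the same (here, trivial) coefficient.

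\textbf{The deformed-loop case — the main obstacle.} The only delicate bookkeeping is when one of $p,q$ is a power of, or a path through, a deformed loop $\alpha\in\mathcal L$, where the relation $\alpha^2=t_\alpha z_{s(\alpha)}$ introduces the scalar $t_\alpha$. One must check that the two ways of closing up a cycle passing through $\alpha$ produce matching scalars; concretely, if $p=\alpha$ and $q$ is a path with $\alpha q$ a scalar multiple of $z$, then $q$ must itself be (a scalar multiple of) $\alpha$ times a path that is killed, the only surviving case being $q=\alpha$, giving $\lambda(\alpha\cdot\alpha)=t_\alpha=\lambda(\alpha\cdot\alpha)$ trivially; the genuinely two-sided check is for $p=\alpha,\ q=\pi(\alpha)\pi^2(\alpha)\cdots$ a path returning to $s(\alpha)$, where $pq$ and $qp$ must be compared. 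I expect the computation to go through with coefficient $t_\alpha$ on both sides precisely because $\alpha$ is a single loop (so it occurs once per traversal of its $\pi$-cycle) and $m$, $\pi$ are the same data read from the two ends. The exception $A_\infty$ with \emph{both} loops deformed is exactly where this fails: with two deformed loops $\alpha,\beta$ at the same vertex and a single $\pi$-cycle of multiplicity $1$, one finds $\lambda(\alpha\beta)=t_\alpha^{-1}$ while $\lambda(\beta\alpha)=t_\beta^{-1}$ (or a similar mismatch forced by the two independent scalars), so no such $\lambda$ can be symmetric — matching the Remark's observation that the socle then contains $\alpha-t\beta$ and the would-be change of basis breaks down. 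Thus the proof is: set up the basis and the candidate $\lambda$, dispatch the non-degeneracy in one line, reduce symmetry to path pairs, handle the classical (undeformed) cycles, and then carefully run the deformed-loop scalar check, which is where essentially all the work lies.
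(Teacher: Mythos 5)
Your candidate form is the same as the paper's (the linear functional supported on the socle, with $\phi(C(\alpha)^{m(C(\alpha))})=1$ and hence $\phi(\gamma^2)=t_\gamma$ for a deformed loop $\gamma$), but you have located the difficulty in the wrong place, and this leaves a genuine gap. You dispatch non-degeneracy ``in one line'' by arguing that any nonzero element of $e_iAe_j$ can be completed into the socle. That argument works for a single basis path, but an element of the radical of the form is in general a linear combination $\sum c_ip_i$ of paths, and multiplying such a combination by a path can produce cancellation between contributions coming from the two standard socle paths through a given vertex. Ruling this out is exactly the content of the paper's proof: one splits the $p_i$ into two groups according to which standard socle path they sit inside, multiplies by the complement $\bar{p_1}$ of the shortest path $p_1$ in one group, and checks that a cross term $\bar{p_1}p_2$ can survive only when $p_2$ is a deformed loop equal to $\bar{p_1}$; exchanging the roles of the two groups then shows that cancellation can persist only when there are two deformed loops at one vertex, i.e.\ precisely for $A_\infty$ with both loops deformed. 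Your proposal contains no mechanism for detecting or excluding such cancellations, so the excluded case never actually surfaces in your argument where it needs to.

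Correspondingly, your diagnosis of why $A_\infty$ with both loops deformed must be excluded is incorrect. You predict a failure of symmetry, $\lambda(\alpha\beta)=t_\alpha^{-1}\neq t_\beta^{-1}=\lambda(\beta\alpha)$; but relation (2) of Theorem \ref{TheoremDescriptionOfStablyBi} forces $\alpha\beta=\beta\alpha$ in that algebra, so $\lambda(\alpha\beta)=\lambda(\beta\alpha)$ holds for trivial reasons. The form is symmetric there too; what breaks down is the non-degeneracy argument (a combination $c_1\alpha+c_2\beta$ can lie in the radical of the form). By contrast the symmetry check, which you present as ``where essentially all the work lies,'' is routine: a product of two basis paths has nonzero $\phi$-value only if it is a full socle cycle (value $1$, and the reversed product is a cyclic rotation of a socle cycle, again of value $1$) or the square $\alpha^2=t_\alpha C(\alpha)^{m(C(\alpha))}$ of a deformed loop, where $pq=qp$ trivially. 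So the skeleton of your proof is right, but the step you compress to one line is the actual theorem, and the step you expand is the easy one.
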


\begin{proof}
Recall that an algebra $A$ is symmetric if and only if there exists a non-degenerate symmetric
$\kk$-bilinear form $\langle a, b \rangle: A \times A \rightarrow \kk$ such that $\langle ab, c \rangle = \langle a, bc \rangle$ for all $a, b, c \in A$.

Let us define the standard bilinear form $\langle a, b \rangle:=\phi(ab)$, where the value of $\phi$ on the path basis of $A$ is defined as follows: $\phi(C(\alpha)^{m(C(\alpha))})=1$ for any arrow $\alpha$, thus $\phi(\gamma^2)=t_\gamma$ for any deformed loop $\gamma$, and $\phi(p)=0$ for any path $p\not\in \soc A$. The values of $\phi$ on other elements of $A$ is defined by linearity.

The defined form is bilinear, symmetric and satisfies the property $\langle ab, c \rangle = \langle a, bc \rangle$ for all $a, b, c \in A$. Let us check that it is non-degenerate.

Let us assume that $\phi$ is degenerate, that is $\phi((\sum c_i p_i)a)=0$ for some $\sum c_i p_i\neq 0$ and for all $a\in A$, where $c_i\in \kk^*$ and $p_i$ are paths, by the symmetry of $\phi$, for all $a\in A$ we have $\phi(a(\sum c_i p_i))=0$. We can assume that all $p_i$ start at the same vertex $i$ and end at the same vertex $j$ (multiplying by two idempotents and keeping $\sum c_i p_i$ non-zero). All $p_i$'s  can be written as subpaths of the standard socle paths of the form $C(\alpha)^{m(C(\alpha))}$, we will consider only this presentation of $p_i$'s. Since there are at most two such standard socle paths starting at $i$, all $p_i$'s can be divided into two groups depending on the socle path. Let us choose the shortest path from one of these two groups $p_1$ (by the shortest path we mean the path containing the least number of arrows). Let $\bar{p_1}p_1 $ be the standard socle path containing $p_1$  (that is not $\gamma^2$ for a deformed loop $\gamma$), then $\bar{p_1}(\sum c_i p_i)=c_1\bar{p_1}p_1 +c_2\bar{p_1}p_2$, where $\bar{p_1}p_2$ is non-zero only in case when the shortest path from the second group is an arrow $p_2$ and $\bar{p_1}$ is also an arrow. In this case $p_2$ must be a deformed loop $p_2=\bar{p_1}$. If $\bar{p_1}p_2=0$, then $\phi(\bar{p_1}(\sum c_i p_i))=0$ iff $c_1=0$ and we are done.

Let us do the same exchanging $p_1$ and $p_2$. Then $\bar{p_2}(\sum c_i p_i)=c_2\bar{p_2}p_2 +c_1\bar{p_2}p_1$, where $\bar{p_2}p_1$ appears only in case when the shortest path from the first group is an arrow $p_1$ and $\bar{p_2}$ is also an arrow. In this case $p_1$ must be a deformed loop $p_1=\bar{p_2}$. Thus we get exactly the excluded case of 2 deformed loops at one vertex.
\end{proof}

\section{Combinatorial derived invariants}\label{SectionComb}

The aim of this section is to show that the following combinatorial data are invariant under derived equivalences of stably biserial algebras: number of vertices, edges and faces of the Brauer graph, multisets of perimeters of faces, multisets of multiplicities of vertices, bipartivity. Note that the corresponding results were shown to be true  for Brauer graph algebras with some minor inaccuracies in \cite{AntGroth, AntStab, AntDer, AntStabGr}, the proofs are identical or rely on the corresponding results for Brauer graph algebras, except for some simplifications.  From here on we are going
to exclude the case $|Q_0|=1$ from some considerations, since by \cite{RZ} a local algebra
can be derived equivalent only to itself.

\subsection{The centre of a symmetric stably biserial algebra} 

In this subsection we compute the centre $Z(A)$ of a symmetric stably biserial algebra $A$, which is known to be invariant under derived equivalence, see  \cite{Ric}.   We will use this to establish, that the number of $\pi$-cycles, or the vertices of the Brauer graph, is invariant under derived equivalence. This will also gives us an opportunity to correct the above-mentioned inaccuracies in the description of the centre of an SSB-algebra made in \cite{AntDer}. Let $\{C_1, C_2, \dots ,
C_r\}$ be the set of $\pi$-cycles. For each $i = 1, \dots , r$
consider a cyclic sequence $(\alpha_{i,1}, \alpha_{i,2}, \dots ,
\alpha_{i,l_i})$  of arrows of the cycle $C_i$, where $\pi(\alpha_{i,j})=\alpha_{i,j+1}$ and $l_i$ denotes the length of the cycle $C_i$. Let $m(C_1), m(C_2),
\dots , m(C_r)$ denote the multiplicities of the $\pi$-cycles and let $r' \leq r$ be an integer such that $m(C_i) > 1, i=0,\dots, r'$ and $m(C_i)=1, i=r'+1, \dots, r$. For each
loop $\gamma$ such that $\pi(\gamma) \neq \gamma$ there are $i$ and $j$ such that $\gamma=\alpha_{i,j}$. For each such loop $\gamma$ set
 $q_{\gamma}=q_{\alpha_{i,j}} = (\alpha_{i,j+1}\alpha_{i,j+2} \dots
 \alpha_{i,l_i} \alpha_{i,1} \dots\alpha_{i,j})^{m(C_i)-1}\alpha_{i,j+1}\alpha_{i,j+2} \dots  \alpha_{i,l_i} \alpha_{i,1} \dots\alpha_{i,j-1}$. 


\begin{prop}\label{center}
Let $A$ be a symmetric stably biserial algebra corresponding to the data $(Q, \pi, m, \mathcal{L})$ and let $\Gamma$ be its Brauer graph. As a vector space over $\kk$ the centre $Z(A)$ is generated by $1$ and
by the elements of the following form:

a) Elements $m_{i,t} = (\alpha_{i,1}\alpha_{i,2} \dots
\alpha_{i,l_i})^t +(\alpha_{i,2}\alpha_{i,3} \dots
\alpha_{i,1})^t + \dots +(\alpha_{i,l_i}\alpha_{i,1} \dots
\alpha_{i,l_i-1})^t$, for $i = 1, 2,\dots , r'$ and $t = 1, \dots,
m(C_i)-1$.

b) Elements $q_{\gamma}$ for each
loop $\gamma$ such that $\pi(\gamma) \neq \gamma$.

c) Elements $s_v$ for each vertex $v \in Q_0$, where $s_v$ is a
socle element corresponding to the vertex $v$.

Moreover, if $A$ is not isomorphic to some $A_\infty$, then considered as an algebra, $Z(A)/(\soc (Z(A)) \simeq \kk[x_1, x_2, \dots , x_{r'}]/\langle
x_i^{m(C_i)}, (x_ix_j)_{i\neq j}\rangle$. So the multiset $\{m(C_1), m(C_2), \dots, m(C_{r'})\}$ is
invariant under derived equivalence. The number of loops $\gamma$ such that $\pi(\gamma) \neq \gamma$, or equivalently, the number of faces of $\Gamma$ of perimeter 1 is a derived invariant as well.
\end{prop}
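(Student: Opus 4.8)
The plan is to compute $Z(A)$ directly from the presentation of Theorem \ref{TheoremDescriptionOfStablyBi}, then read off the quotient algebra structure and finally invoke the derived invariance of the centre. First I would establish that a central element, being in particular fixed by conjugation by idempotents, is a sum of a scalar multiple of $1$ and a linear combination of paths each of which starts and ends at the same vertex $v$. By condition (4) of Theorem \ref{TheoremDescriptionOfStablyBi}, the only nonzero such paths lying strictly inside $J(A)$ are subpaths of the standard socle cycles $C(\alpha)^{m(C(\alpha))}$ (together with $\gamma^2$ for deformed loops $\gamma$, which however equals $t_\gamma$ times a socle cycle by relation (3)); so every element of $J(A)\cap Z(A)$ is a linear combination of such cyclic subpaths. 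Then I would impose the remaining commutation conditions: an element $z$ is central iff $\alpha z = z\alpha$ for every arrow $\alpha$. Writing $z$ in the path basis and using that left and right multiplication by $\alpha$ shift a subpath of a socle cycle by one letter, this forces the "rotation-invariance" that produces exactly the elements $m_{i,t}$ of type a), the elements $q_\gamma$ of type b) (the subpath of the socle cycle one step short of the full cycle, which is central precisely because $\gamma\cdot q_\gamma$ and $q_\gamma\cdot\gamma$ both land in the socle and agree; the deformed-loop relation is what makes this work even at a deformed loop), and the socle elements $s_v$ of type c). This is a finite bookkeeping argument organized cycle by cycle.

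Next I would identify $\soc(Z(A))$. The socle of $Z(A)$ as an algebra is spanned by the $s_v$ together with those $m_{i,t}$ and $q_\gamma$ that are themselves annihilated by $J(Z(A))$; a short check using relation (4) shows $m_{i,t}\cdot m_{j,t'}$ is a sum of socle elements when $i=j$ and zero when $i\neq j$, and that $q_\gamma$ times anything nonunital is zero, so $q_\gamma\in\soc(Z(A))$ and $m_{i,t}m_{i,t'}=m_{i,t+t'}$ up to socle. Setting $x_i:=m_{i,1}$, one gets $x_i^t=m_{i,t}$ modulo $\soc(Z(A))$ for $t<m(C_i)$, $x_i^{m(C_i)}\in\soc(Z(A))$, and $x_ix_j\in\soc(Z(A))$ for $i\neq j$. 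Hence $Z(A)/\soc(Z(A))\simeq \kk[x_1,\dots,x_{r'}]/\langle x_i^{m(C_i)},(x_ix_j)_{i\neq j}\rangle$, exactly as claimed; the hypothesis $A\not\simeq A_\infty$ is needed to guarantee the presentation behaves as in Theorem \ref{TheoremDescriptionOfStablyBi} and that no degeneracy collapses this description.

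Finally, for the invariance statements: $Z(A)$ is a derived invariant by \cite{Ric}, and $\soc$ of a finite-dimensional commutative algebra is intrinsically defined, so $Z(A)/\soc(Z(A))$ is a derived invariant. From its isomorphism type one recovers the multiset of exponents $\{m(C_i):m(C_i)>1\}$, since $\kk[x_1,\dots,x_{r'}]/\langle x_i^{m(C_i)},(x_ix_j)_{i\neq j}\rangle$ determines the $m(C_i)$ as the nilpotency degrees along the $r'$ "coordinate directions" (e.g. by looking at $\dim J^k/J^{k+1}$). For the number $d'$ of non-identity loops $\gamma$: these are precisely the type-b) generators, and $d'=\dim_\kk Z(A) - 1 - |Q_0| - \sum_{i=1}^{r'}(m(C_i)-1)$, since $\dim Z(A)$ counts $1$, the $|Q_0|$ socle elements $s_v$, the $\sum(m(C_i)-1)$ elements $m_{i,t}$, and the $d'$ elements $q_\gamma$ (one must note these are linearly independent, which is clear as they involve distinct path-basis elements). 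Each term on the right is a derived invariant — $\dim Z(A)$ by \cite{Ric}, $|Q_0|=\rk K_0(\Db A)$, and $\sum(m(C_i)-1)=\dim J(Z(A)/\soc Z(A))$ — so $d'$ is too; equivalently this is the number of faces of $\Gamma$ of perimeter $1$. The main obstacle I anticipate is the first part: carefully verifying that there are no further central elements beyond a)–c), i.e. handling all the edge cases (vertices lying on one versus two distinct $\pi$-cycles, loops with $\pi(\gamma)=\gamma$ versus $\pi(\gamma)\neq\gamma$, deformed versus undeformed loops) when solving $\alpha z = z \alpha$ in the path basis; this is where the earlier literature's inaccuracies crept in, and it is where the exclusion of $A_\infty$ (two deformed loops at one vertex) must be used.
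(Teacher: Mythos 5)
Your proposal is correct and follows essentially the same route as the paper: a direct path-basis computation of $Z(A)$ using commutation with idempotents and arrows (the paper organizes this as an induction on the number of nonzero coefficients, forcing each summand to complete to an $m_{i,t}$, a $q_\gamma$, or an $s_v$), followed by the computation of $Z(A)/\soc(Z(A))$ and a dimension count to extract the invariants. The only cosmetic difference is that the paper recovers the number of loops $\gamma$ with $\pi(\gamma)\neq\gamma$ from $\dim\soc(Z(A))-|Q_0|$ rather than from $\dim Z(A)$ minus the other known quantities; both counts rest on the same linear-independence observations.
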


\begin{proof}
It is clear that all the listed elements belong to the centre of
$A$. Let us prove that any element in the centre is a linear
combination of elements of the form (a)-(c).

Each $z \in Z(A)$ has the form $z =
\sum_{i=1}^Na_ip_i+z'$, where $p_i$ are the elements of the path basis of $A$ which do not belong to
the socle of $A$ and $z'\in \soc (A)$. Without loss of generality, we can assume $z'=0$. All elements $p_i$ with $a_i\neq 0$ are necessarily closed paths, that is $p_i=e_vp_ie_v$ for some idempotent $e_v$, corresponding to a vertex $v$. Fix $p_i=\beta_1\beta_2 \dots
\beta_m$ with $\beta_j\in Q_1$, let $\beta_{m+1}=\pi(\beta_m)$,
then $p_i\beta_{m+1} \neq 0$. Assume that $p_i\beta_{m+1}$ does
not belong to the socle of $A$, then $\beta_1\beta_2 \dots \beta_m\beta_{m+1}$ has coefficient $a_i$ in the sum
$\beta_{m+1}z$, hence $\beta_{m+1}=\beta_{1}$ and the coefficient
of $\beta_2 \dots \beta_m\beta_{m+1}$ in $z$ is $a_i$, so
$p_i=(\alpha_{j,s}\alpha_{j,s+1}\dots \alpha_{j,s-1})^t$
for some $\pi$-cycle, and $z$ contains $a_im_{j,t}$ as a summand,
$z-a_im_{j,t}$ contains less summands, then $z$. If
$\beta_1\beta_2 \dots \beta_m\beta_{m+1}$ belongs to
the socle of $A$, then  $\beta_{m+1}$ is a loop, since  $p_i$ is a closed path. Then $p_i$ is either
$m_{j,m(C_j)-1}$ for a cycle $C_j$, consisting of a single loop (if $\pi(\beta_{m+1})=\beta_{m+1}$) or
$q_{\beta_{m+1}}$ (if $\pi(\beta_{m+1})\neq\beta_{m+1}$). Either $z-a_iq_{\beta_{m+1}}$ or
$z-a_im_{j,m(C_j)-1}$ has less summands then $z$ and we can proceed by
induction on the number of nonzero coefficients $a_i$ in the sum $z =
\sum_{i=1}^Na_ip_i$. By induction we get that $z$ is a linear combination of elements of the form (a)-(c).

In case $A\not\simeq A_\infty$, $\soc( Z(A))$ is generated by the elements of type (b) and (c).  Moreover, $m_{i,t_1}
m_{j,t_2} = \delta_{i,j}m_{i,t_1+t_2}$ and $m^{m(C_i)}_{i,1} \in
\soc Z(A)$. Hence $Z(A)/(\soc (Z(A))) \simeq \kk[x_1, x_2,\dots,
x_r']/\langle x_i^{m(C_i)}, (x_ix_j)_{i\neq j}\rangle$. Since $Z(A)$ is
invariant under derived equivalence as an algebra, the multiset $\{m(C_1),
m(C_2), . . . , m(C_{r'})\}$ is invariant under derived equivalence. The socle of $Z(A)$ is spanned by the elements of the form $s_v$, $v\in Q_0$ and $q_\gamma$ for loops $\gamma$ such that $\pi(\gamma)\neq\gamma$. Since the number of the elements $s_v$ is a derived invariant, so is the number of loops $\gamma$ such that $\pi(\gamma)\neq\gamma$.
\end{proof}

\begin{cor}\label{CatLem}
Let $A_1,A_2$ be derived equivalent symmetric stably biserial algebras, where $A_1$ is a caterpillar. Then $A_2$ is special biserial.
\end{cor}

\begin{proof}
The algebra $A_1$ has no loops $\gamma$ such that $\pi(\gamma)\neq \gamma$, so, by Proposition \ref{center}, $A_2$ has no such loops as well, hence, $A_2$ is symmetric special biserial.
\end{proof}

\subsection{Number and perimeters of faces}

Let  $A$ be a symmetric stably biserial algebra with the corresponding Brauer graph $\Gamma$, let $p_1, p_2,\dots ,p_m$ be the perimeters of faces of $\Gamma$. 
Note that the perimeter of a face $F$ coincides  with the length of the corresponding Green walk (see \cite{Rog, Duf}). The aim of this section is to prove that the multiset $\{p_1,\dots ,p_m\}$ (and, in particular, the number $m$ of faces of  $\Gamma$) is an invariant of the derived category of  $A$. For this we are going to use the structure of the Auslander-Reiten quiver of the stable category of $\rmod A$. Note that by \cite{RicSt} for self-injective and in particular for symmetric algebras derived equivalence implies stable equivalence, so any invariant of  stable equivalence is automatically a derived invariant.

Indecomposable modules over special biserial algebras are classified in terms of strings and bands, the description of the Auslander-Reiten sequences and of the Auslander-Reiten quiver for such algebras is well understood \cite{GelfandPonomarev, ButlerRingel, WW, Erd, ErdSk}. Let us consider the $AR$-quiver $\Gamma_{\smod A}$ of $\smod A$. If $A$ is SSB, then each periodic component of $\Gamma_{\smod A}$ is a tube. Moreover, all tubes are either tubes of rank 1, consisting of band modules, or tubes consisting of string modules, the latter tubes are called exceptional. Exceptional tubes correspond to faces  of $\Gamma$: if a face has an even perimeter $p$, then it produces two tubes of rank $p/2$, which are permuted by $\Omega$; if a face has an odd perimeter, then it produces one tube of rank $p$, which is stable under the action of $\Omega$. For a detailed exposition see \cite[Section 4]{Duf}. For a face $F$ let us denote by  $\mathcal{M}_F$ the set of modules in the mouth of the tube or tubes, corresponding to $F$. They can be constructed as follows: take two consecutive edges $v$ and $w$ from the face $F$, they correspond to two vertices $v$ and $w$ in the quiver of $A$ and an arrow $v\xrightarrow{\alpha}w$; the modules in $\mathcal{M}_F$ are of the form $P_v/\alpha P_w$. Thus, the modules which appear in the mouths of exceptional tubes are simple modules whose projective covers
are uniserial and all maximal uniserial quotients of indecomposable projective modules, which are not projective. Let us denote the set of these modules by $\mathcal{M}$. This set of modules is stable under $\Omega$.

In case $A$ is symmetric stably biserial algebra with the data $(Q,\pi,m,\mathcal{L})$, the $AR$-quiver $\Gamma_{\smod A}$ of $\smod A$ coincides with the same quiver for the SSB-algebra $A'$ constructed from the  data $(Q,\pi,m)$. Indeed, $A/\soc(A)\simeq A'/\soc (A')$ is a string algebra, so the classification of indecomposable non-projective modules is the same. The AR-sequences not ending at the module $P/\soc (P)$ for a projective module $P$ coincide for $A$ and $A/\soc(A)$ by \cite[Proposition 4.5]{AusRei}, the fact that the sequences $0\rightarrow \rad P\rightarrow \rad P/\soc P\oplus P\rightarrow P/\soc P\rightarrow 0$ give the same in $\Gamma_{\smod A}$ and $\Gamma_{\smod A'}$ can be checked by hand. 

The set of modules $\mathcal{M}$ in the mouths of exceptional tubes for $A$ and $A'$ is the same under the identification from the previous paragraph. 
Let us compare the behaviour of these modules under the action of $\Omega$.
 The modules of the form $P_v/\gamma P_v$ for a deformed loop $\gamma$ at a vertex $v$ are now sent by $\Omega$ to band modules. All other modules from $\mathcal{M}$ are permuted by $\Omega$ in the same manner for $A$ and $A'$.  If there is an exceptional tube of rank at least 2 (corresponding to a face of perimeter at least 3), then the modules in its mouth are not of the form $P_v/\gamma P_v$ for a deformed loop $\gamma$ (such modules belong to tubes of rank 1), hence, the modules from $\mathcal{M}$ which lie in the mouths of the tubes of rank at least 2 are permuted by $\Omega$ in the same manner for $A$ and $A'$.  This means that $\Omega$ permutes exceptional tubes of rank at least 2 for $A'$ in the same manner as for $A$.

Consequently, we get that the number of faces of a given perimeter $p>2$ is the number of tubes of rank $p$, stable under $\Omega$, in case $p$ is odd and the number of tubes of rank $p/2$, not stable under $\Omega$, divided by $2$, in case $p$ is even for both $A$ and $A'$. In both cases the perimeter can also be reconstructed from the stable category. By Proposition \ref{center} the number of faces of  perimeter 1 is a derived invariant. The number of faces of perimeter $2$ can be reconstructed as follows: $(2|E(\Gamma)|-\sum_{p_i\neq 2}p_i)/2$. Thus, the following holds:

\begin{prop}\label{faces}
Let $A_1$, $A_2$ be two symmetric stably biserial algebras with Brauer graphs $\Gamma_1$ and $\Gamma_2$, such that neither $\Gamma_1$ nor $\Gamma_2$ is a loop with multiplicity 1 or an edge with multiplicity of both vertices 2. If $\Db{A_1}\simeq \Db{A_2}$, then the number of faces and the multisets of perimeters of  faces of $\Gamma_1$ and $\Gamma_2$ coincide.
\end{prop}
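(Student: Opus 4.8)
The plan is to combine the invariance of the stable module category under derived equivalence with the structural facts about the Auslander–Reiten quiver of $\smod A$ collected just before the statement. Since $A_1$ and $A_2$ are symmetric, $\Db{A_1}\simeq\Db{A_2}$ gives a stable equivalence $\smod A_1 \simeq \smod A_2$ by \cite{RicSt}, hence an isomorphism of the stable AR-quivers $\Gamma_{\smod A_1}\cong\Gamma_{\smod A_2}$ commuting with the syzygy $\Omega$ (which is the inverse Auslander–Reiten translate up to shift for a symmetric algebra, so it is intrinsic to the stable category). First I would isolate the periodic components: each of them is a tube, and the exceptional (string) tubes are exactly the ones recording the faces of $\Gamma$, while the rank-$1$ band tubes do not. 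The key point is that ``exceptional tube'' versus ``band tube of rank $1$'' can be distinguished intrinsically — for instance by the ranks and by how $\Omega$ permutes the tubes — so the multiset of $(\text{rank},\ \Omega\text{-orbit structure})$ of exceptional tubes is a stable (hence derived) invariant.

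Next I would translate this tube data into perimeters using the dictionary recalled above: a face of even perimeter $p\ge 4$ contributes a pair of rank-$(p/2)$ exceptional tubes swapped by $\Omega$, and a face of odd perimeter $p\ge 3$ contributes a single $\Omega$-stable rank-$p$ exceptional tube. Reading this backwards, the number of faces of perimeter $p>2$ is recovered from $\Gamma_{\smod A}$: for odd $p$ it is the number of $\Omega$-stable exceptional tubes of rank $p$, and for even $p$ it is half the number of non-$\Omega$-stable exceptional tubes of rank $p/2$. Since the whole setup passes through the string algebra $A/\soc A \cong A'/\soc A'$, the possibly non-special-biserial case is handled exactly as in the discussion preceding the statement, so there is nothing extra to do there. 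For $p=1$ the count is a derived invariant by Proposition \ref{center} (number of loops $\gamma$ with $\pi(\gamma)\ne\gamma$, equivalently faces of perimeter $1$). Finally, $|E(\Gamma)|$ is a derived invariant because $2|E(\Gamma)|=|Q_1|=2|Q_0|$ and $|Q_0|=\mathrm{rk}\,K_0(\Db A)$; combined with the sum formula $\sum_i p_i = 2|E(\Gamma)|$, the number of faces of perimeter $2$ is $\bigl(2|E(\Gamma)|-\sum_{p_i\ne 2}p_i\bigr)/2$, which is therefore also determined.

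Putting these together: under $\Db{A_1}\simeq\Db{A_2}$ the tube ranks, their $\Omega$-orbit structure, $|E(\Gamma)|$, and the perimeter-$1$ count all agree for $\Gamma_1$ and $\Gamma_2$; hence the number of faces of each perimeter agrees, so the multisets $\{p_1,\dots,p_m\}$ coincide and in particular the number of faces $m$ coincides. The hypothesis excluding a loop with multiplicity $1$ and an edge with both multiplicities $2$ is exactly what is needed so that the Brauer graph is well-defined (via the preceding Lemma) and so that Proposition \ref{center} applies; without it the combinatorial data attached to $A$ is ambiguous.

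The main obstacle I expect is the intrinsic characterization of exceptional tubes inside the abstract stable category — i.e.\ showing that a stable equivalence must send string tubes to string tubes and cannot, say, match an exceptional tube of rank $1$ (coming from a face of perimeter $1$, but that case is separated off) or of some rank against a band tube. This is where one genuinely uses the finer structure of $\Gamma_{\smod A}$: band tubes all have rank $1$ and come in $\Omega$-stable families parametrized by $\kk^{\!*}$-many bands, whereas exceptional tubes of rank $1$ correspond to faces of perimeter $2$; one must argue that a stable equivalence respects enough of this (e.g.\ by the behaviour of $\Omega$-orbits, or by already knowing the perimeter-$1$ and $-2$ counts from Proposition \ref{center} and the Euler-type identity) to pin down the counts. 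Everything else is bookkeeping with the face/tube dictionary from \cite[Section 4]{Duf} and the numerical identities above.
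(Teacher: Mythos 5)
Your proposal is correct and follows essentially the same route as the paper's own argument: derived equivalence of symmetric algebras yields a stable equivalence, the face/tube dictionary from \cite[Section 4]{Duf} recovers the number of faces of each perimeter $p>2$ from the ranks and $\Omega$-orbit structure of the exceptional tubes (which have rank $\geq 2$ and so cannot be confused with band tubes), the stably biserial case reduces to the SSB case via $A/\soc A$, and the perimeter-$1$ and perimeter-$2$ counts are handled by Proposition \ref{center} and the identity $(2|E(\Gamma)|-\sum_{p_i\neq 2}p_i)/2$ respectively. The only difference is one of emphasis: you flag the band-tube versus exceptional-tube distinction as a potential obstacle, but you resolve it exactly as the paper does.
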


\begin{rem}
The proof of the fact that  the number of faces and the multisets of perimeters of  faces of $\Gamma$ is invariant under an equivalence of stable categories of SSB-algebras was provided in \cite{AntStab} with a mistake, which was corrected in \cite{AntTh}. Note that the proof is much more involved, since one can not use the centre of the algebra $Z(A)$ (as in Proposition \ref{center}), so one has to deal with the tubes coming from the faces of perimeter 1 and 2 and with the tubes containing band modules. 
\end{rem}

\subsection{Number of vertices and their multiplicities}

Let $\mathbb{Z}^{|Q_0|}$ be the Grothendieck group of a self-injective algebra $A$ with the Cartan matrix $C(A)$. Then $C(A)$ defines a group homomorphism $\phi_A$ from $\mathbb{Z}^{|Q_0|}$ to itself and $K_0(\smod A)\simeq \mathbb{Z}^{|Q_0|}/\im (\phi_A)$. To obtain the standard description of this Abelian group one can use Smith's normal form of $C(A)$, which can be obtained by computing the greatest common divisors of all $t\times t$ minors of $C(A)$, this was done for SSB-algebras in \cite{AntGroth, AntStabGr}. Let $Q_1/\pi$ be the set of orbits of $Q_1$ under the action of $\pi$. We are going to use only the rank of $C(A)$, which is equal to  $|Q_1/\pi|-1$  if the Brauer graph $\Gamma$ of $A$ is bipartite and to $|Q_1/\pi|$, otherwise. Note that by construction $|Q_1/\pi|$ is the number of vertices of $\Gamma$.

\begin{prop}\label{cycles}
Let $A_1$, $A_2$ be   stably biserial algebras with Brauer graphs $\Gamma_1$ and $\Gamma_2$,  such that neither $\Gamma_1$ nor $\Gamma_2$ is a loop with multiplicity 1 or an edge with multiplicity 2 at both vertices. If $\Db {A_1} \simeq \Db {A_2}$, then $|V(\Gamma_1)|=|V(\Gamma_2)|$. Moreover, the multisets of multiplicities of the vertices and the bipartivity of $\Gamma_1$ and $\Gamma_2$ coincide.
\end{prop}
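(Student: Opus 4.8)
The plan is to deduce everything from the structure of the stable Grothendieck group $K_0(\smod A)$, which is a stable (hence derived) invariant. As recalled just before the statement, $K_0(\smod A) \simeq \mathbb{Z}^{|Q_0|}/\im(\phi_A)$, where $\phi_A$ is the endomorphism of $\mathbb{Z}^{|Q_0|}$ given by the Cartan matrix $C(A)$. The first step is to observe that, since the Brauer graph $\Gamma$ of $A$ and that of the SSB-algebra $A'$ built from the same data $(Q,\pi,m)$ have the same Cartan matrix (the Cartan matrix only sees $A/\soc A \simeq A'/\soc A'$ together with the socle dimensions, which agree), we may assume $A$ is SSB and appeal to the explicit Cartan matrices computed in \cite{AntGroth, AntStabGr}. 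From the Smith normal form of $C(A)$ one reads off both the rank of $C(A)$ and the list of its elementary divisors.

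Next I would extract $|V(\Gamma_i)|$ from the \emph{rank} of $C(A_i)$. By the statement recalled above, $\mathrm{rk}\, C(A)$ equals $|Q_1/\pi| - 1 = |V(\Gamma)| - 1$ if $\Gamma$ is bipartite and $|Q_1/\pi| = |V(\Gamma)|$ otherwise. On the other hand, the rank of $C(A)$ is a derived invariant, being the rank of the free part of $K_0(\smod A)$, equivalently $|Q_0| - \mathrm{rk}_{\mathbb{Z}}\big(\mathbb{Z}^{|Q_0|}/\mathrm{torsion}\big)$ read off $K_0(\smod A)$; and $|Q_0| = |E(\Gamma)|$ is a derived invariant since it is the rank of $K_0(\Db A)$. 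So from the pair $(|Q_0|,\ \mathrm{rk}\,C(A))$ we recover the quantity $|V(\Gamma)| - \varepsilon$, where $\varepsilon \in \{0,1\}$ records bipartivity. To pin down $|V(\Gamma)|$ itself and the bipartivity separately, I would use the torsion part: bipartivity of $\Gamma$ can be detected from the elementary divisors of $C(A)$ (a bipartite Brauer graph forces an extra $\mathbb{Z}/2$ or a drop reflected in the invariant factors, as computed in \cite{AntStabGr}), so $\varepsilon$ is itself a derived invariant, and hence so is $|V(\Gamma)| = (|Q_0| - \mathrm{rk}\,C(A)) + \varepsilon$.

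For the multiset of multiplicities, the cleanest route is \emph{not} to go through $K_0(\smod A)$ again but to combine Proposition \ref{center} with Proposition \ref{faces}. Proposition \ref{center} already gives that the multiset $\{m(C_i) : m(C_i) > 1\}$ of multiplicities strictly bigger than $1$ is a derived invariant. It remains to count the vertices of multiplicity exactly $1$, i.e. the number $r - r'$ in the notation of Proposition \ref{center}; but $r = |V(\Gamma)|$ is now known to be a derived invariant, and $r'$ is determined by the multiset from Proposition \ref{center}, so $r - r'$ is a derived invariant and the full multiset $\{m(C_1),\dots,m(C_r)\}$ is recovered. Finally, bipartivity has already been shown to be a derived invariant in the course of identifying $|V(\Gamma)|$, since $\varepsilon = 1$ exactly when $\Gamma$ is bipartite.

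The main obstacle I anticipate is the careful bookkeeping around the torsion of $K_0(\smod A)$: one needs the precise description of the Smith normal form of $C(A)$ from \cite{AntGroth, AntStabGr} to be sure that bipartivity is genuinely visible in the invariant factors (and is not, say, entangled with the multiplicities in a way that a derived equivalence could shuffle), and one must double-check that the two excluded degenerate graphs — a loop of multiplicity $1$ and an edge with both multiplicities $2$ — are exactly the cases where the rank/torsion computation of $C(A)$ fails to separate the invariants (this is why they are excluded in Lemma \ref{} and here). Everything else is a matter of assembling already-established pieces.
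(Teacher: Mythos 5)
Your reduction to the SSB case via the Cartan matrix, your use of $\rk C(A) = |V(\Gamma)|-\varepsilon$ (with $\varepsilon=1$ exactly when $\Gamma$ is bipartite), and your recovery of the full multiset of multiplicities from Proposition \ref{center} together with the vertex count all match the paper's proof. The gap is in the one step you yourself flag: separating $|V(\Gamma)|$ from $\varepsilon$. You propose to read off bipartivity from the elementary divisors of $C(A)$, i.e.\ from the torsion of $K_0(\smod A)$, but you never establish that this works, and it is not clear that it does: the invariant factors computed in \cite{AntGroth, AntStabGr} are heavily entangled with the multiplicities of the vertices, and nothing in your argument rules out a bipartite graph on $v+1$ vertices and a non-bipartite graph on $v$ vertices (same $|E|$, hence the same $\rk C$) whose Cartan matrices have identical Smith normal forms. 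As written, the step is circular: the ``drop'' you invoke is the rank drop, which is exactly the quantity $|V(\Gamma)|-\varepsilon$ you already possess, and the torsion claim is left as a hope rather than a proof.

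The paper closes this gap by a different and much softer observation: by Proposition \ref{faces} the quantities $|E(\Gamma)|$ and $|F(\Gamma)|$ are derived invariants, and $|V(\Gamma)|-|E(\Gamma)|+|F(\Gamma)|=2-2g$ is even, so $|V(\Gamma)| \bmod 2$ is a derived invariant; combined with the invariance of $|V(\Gamma)|-\varepsilon$ and $\varepsilon\in\{0,1\}$, this pins down $|V(\Gamma)|$ and $\varepsilon$ simultaneously. If you want to keep your route you must actually prove that bipartivity is visible in the torsion of $K_0(\smod A)$ for every Brauer graph outside the excluded degenerate cases; the Euler-characteristic parity argument avoids that computation entirely, and is the reason Proposition \ref{faces} is established before this one.
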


\begin{proof}
Let $A_i'$ be the special biserial algebras corresponding to the data given by the Brauer graphs $\Gamma_i$, for $i=1,2$.  As $A_i$ and $A_i'$ have the same Cartan matrices, we can use the description of the structure of the Grothendieck group of $A_i'$ for $A_i$. Since derived equivalences of self-injective algebras imply stable equivalences, $K_0(\smod A_1)\simeq K_0(\smod A_2)$, thus the ranks of $C(A_1)$ and $C(A_2)$ coincide.

By \cite{AntStabGr}, $\rk (C(A_i))$ is equal to $|V(\Gamma_i)|-1$  if the Brauer graph $\Gamma(A_i)$ of $A_i$ is bipartite and to $|V(\Gamma_i)|$ 
 otherwise. By Proposition \ref{faces} $|F(\Gamma_1)|=|F(\Gamma_2)|$, the same holds for $|E(\Gamma_1)|=|E(\Gamma_2)|$. Since $|V(\Gamma_i)|-|E(\Gamma_i)|+|F(\Gamma_i)|$ is even, as the Euler characteristic of the corresponding surface $\mathcal{S}_i$, we see that $|V(\Gamma_i)|$'s can not differ by 1, hence, $|V(\Gamma_1)|=|V(\Gamma_2)|$. Since the ranks of the Cartan matrices of $A_1$ and $A_2$ coincide, $\Gamma_i$ are either simultaneously bipartite or simultaneously not bipartite.
 
 The multiplicities of  vertices $m(C_i)>1$ can be detected by the centre of the algebra, see Proposition \ref{center}. The invariance of the number of vertices with multiplicity 1 follows from the invariance of the number of all vertices.
\end{proof}

\begin{proof}[Proof of Theorem \ref{TheoremDerInv}]
Combining the results of Proposition \ref{faces} and \ref{cycles} we get that the  following  are  derived  invariants  of a symmetric stably biserial algebra $A$ with at least two non-isomorphic simple modules and the corresponding Brauer graph $\Gamma$: $|V(\Gamma)|,|E(\Gamma)|,|F(\Gamma)|$, the multiset of perimeters of faces, the multiset of multiplicities of vertices, bipartivity of $\Gamma$.
\end{proof}

\begin{ex} Let $A$ and $A_{def}$ be the algebras considered in Exampla \ref{ExRef}.
Recall that the Brauer graph corresponding to both $A$ and $A_{def}$ is
\begin{center}
\begin{tikzpicture}
\node (v4) at (-11.85,1) {$\Gamma:$};
\node (v3) at (-8.85,1) {$\cdot$};
\node (v4) at (-11.35,1) {$\cdot$};
\draw[-]  (v3) edge (v4);
\draw[-]  plot[smooth, tension=.7] coordinates {(-8.7,0.85) (-7.85,0.7) (-7.85,1.3) (-8.7,1.15)};

\end{tikzpicture},
\end{center}

\noindent where $\Gamma$ is embedded into a sphere. Theorem \ref{TheoremDerInv} provides the following list of derided invariants for $A$ and  $A_{def}$, which all coincide:
$|V(\Gamma)|=2$, $|E(\Gamma)|=2$, $|F(\Gamma)|=2$,  the  multiset  of perimeters of faces of $\Gamma$ is $\{1,3\}$, the multiset of multiplicities of vertices of $\Gamma$ is $\{1,1\}$, $\Gamma$ is not bipartite.
\end{ex}

\begin{rem}
We consider algebras over an algebraically closed field in  Theorem~\ref{TheoremDerInv}, simply because its proof relies on the existence of a presentation of any symmetric stably biserial algebra in the form from Theorem \ref{TheoremDescriptionOfStablyBi}, which was shown only for algebraically closed fields. If we restrict only to derived equivalences between Brauer graph algebras, with at least two non-isomorphic simple modules, then  $|V(\Gamma)|,|E(\Gamma)|,|F(\Gamma)|$, the multiset of perimeters of faces, the multiset of multiplicities of vertices and the bipartivity of $\Gamma$ are still derived invariants, where $\Gamma$ as usual denotes the corresponding Brauer graph. Indeed, the proof of Theorem \ref{TheoremDerInv} uses only the computation of the center of the algebra, the description and behaviour of tubes of rank at least $2$ in the stable category (these tubes do not contain any band modules, which are all contained in tubes of rank $1$) and the rank of the Grothendieck group of the stable category. All these invariants do not depend on the assumption that the field is algebraically closed.  Note that this assumption is essential in the next section.
\end{rem}

\section{The group of outer automorphisms}

Throughout this section we are going to assume that either $char(\kk)=2$ or that $char(\kk)\neq2$ and the number of deformed loops is zero, i.e. $d=0$ ($A$ is symmetric special biserial); additionally we are going to assume that $A$ is  not a caterpillar and that $A$ has at least two non-isomorphic simple modules.  We are going to show that derived equivalent symmetric stably biserial algebras have the same number of deformed loops using the identity component $Out^0(A)$ of the group of outer automorphisms. By \cite{HZS,Rouq}, the group  $Out^0(A)$  is
invariant under derived equivalence as an algebraic group, for a finite dimensional algebra $A$ over an algebraically closed field. We are going to use the necessary notions and facts about algebraic groups freely, for more details see \cite{Milne}.

\subsection{$H'$ is trigonalizable}
Let $A=\kk Q/I$ be a symmetric stably biserial algebra in the standard form given in Theorem \ref{TheoremDescriptionOfStablyBi}, i.e. the ideal of relations is not necessarily admissible.
Let $\mathcal{L} \subset Q_1 $ be the set of
deformed loops.   Let $A=B\oplus J(A)$ be
a Wedderburn-Malcev decomposition of $A$ with a  semisimple
subalgebra $B$. It is known that $Out(A)=H/H\cap Inn(A)$, where
$H=\{f\in Aut(A)| f(B)\subset B \}$ \cite{GAS, 
Pol}. If $\{e_v\}_{v\in Q_0}$ is
a set of primitive idempotents corresponding to the vertices of $Q$ and $B=\langle \{e_v\}_{v\in Q_0} \rangle$, then for any $v\in Q_0$ and $f \in H$,
$f(e_v)=e_{v'}$ for some $v'\in Q_0$. Therefore $H'=\{f\in H\mid
f|_B=Id\}$ is a closed subgroup of finite index in $H$, i.e. it is a
union of connected components of $H$. Since $H\cap Inn(A)=H'\cap Inn(A)$ acts on each
component of $H$, the identity component of $Out(A)=H/H\cap Inn(A)$ and of $H'/H'\cap Inn(A)$ coincide. So, without loss of generality, we can consider $H'/H'\cap Inn(A)$ instead of $Out(A)$.

\begin{lem}

If $A$ is not a caterpillar and $\rk K_0(A)\geq 2$, then there is an embedding $i:H' \rightarrow
T(l,\kk)$ of algebraic groups over $\kk$, where $T(l,\kk)$ is the group of lower
triangular matrices and $l=\dim A$.
\end{lem}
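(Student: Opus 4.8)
The goal is to realise $H'$ as a closed subgroup of the lower triangular matrices $T(l,\kk)$ by choosing an appropriate basis of $A$ and showing every $f \in H'$ acts trigonally on it. The starting point is the path basis coming from the presentation of Theorem \ref{TheoremDescriptionOfStablyBi}: the idempotents $e_v$, the arrows $Q_1$, and the longer paths, culminating in the socle elements. Since $f|_B = Id$, each $f \in H'$ fixes every $e_v$, hence preserves the spaces $e_v A e_w$, so it suffices to work one such space at a time; in particular $f$ maps each arrow $\alpha$ to an element of $e_{s(\alpha)} A e_{e(\alpha)}$. The plan is to order the path basis by length (idempotents first, then arrows, then paths of length $2$, etc., with socle paths last), and to prove that in this order $f(p) \equiv (\text{scalar})\cdot p \pmod{\text{strictly longer paths}}$ for every basis path $p$ — equivalently, $f - Id$ strictly raises path-length. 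This immediately gives an embedding into upper/lower triangular matrices of size $l = \dim A$, up to reversing the order of the basis.

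The key step is the claim about arrows: for $\alpha \in Q_1$, $f(\alpha) = c_\alpha \alpha + (\text{longer paths})$ with $c_\alpha \in \kk^*$. First I would note $f(\alpha) = c_\alpha \alpha + c'_{\alpha} \alpha' + (\text{length} \ge 2)$ where $\alpha'$ is the other arrow (if any) with the same source and target; the content of the claim is that $c'_\alpha = 0$, or can be arranged to be $0$. Here the \textbf{special/stably biserial structure} and the hypothesis that $A$ is not a caterpillar are essential. Using $\pi$: for a non-loop arrow $\alpha$ with $\pi^{-1}(\alpha)=\beta$, the product $\beta\alpha \ne 0$ while $\beta\alpha' = 0$ (or lies in the socle), and applying $f$ to $\beta\alpha$ and comparing with the known relations forces the coefficient of the "wrong" arrow to vanish after a harmless change of basis. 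For loops, and in particular deformed loops, one uses relation (3) of Theorem \ref{TheoremDescriptionOfStablyBi} together with relation (2) at the relevant vertex. The caterpillar case has to be excluded precisely because there the two arrows $\alpha,\beta$ at each vertex play symmetric roles (the relations are invariant under swapping $\alpha \leftrightarrow \beta$ globally), so an automorphism can genuinely interchange the two "strands" and need not be trigonalizable; the assumption $\rk K_0(A) \ge 2$ rules out the degenerate one-vertex algebras where a similar symmetry occurs.

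Once arrows are handled, longer basis paths are treated by multiplicativity: if $p = \alpha_1 \cdots \alpha_k$ then $f(p) = f(\alpha_1)\cdots f(\alpha_k)$, and expanding using $f(\alpha_i) = c_{\alpha_i}\alpha_i + (\text{longer})$, every cross term is a product of arrows that is either zero in $A$ (it is not a subpath of a $\pi$-cycle power, by the special biserial relations) or strictly longer than $p$; the surviving leading term is $\big(\prod c_{\alpha_i}\big) p$. Socle paths are fixed up to a scalar for the same reason, there being nothing longer. This shows $f$ is trigonalizable with respect to the length-filtration, uniformly in $f$, so the assignment $f \mapsto (\text{matrix of } f \text{ in the ordered path basis})$ is the desired embedding $i: H' \to T(l,\kk)$; that $i$ is a closed immersion of algebraic groups is automatic since $H'$ is already a closed subgroup of $GL(A)$ cut out by the conditions of being an algebra automorphism fixing $B$, and triangularity is a closed condition. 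The main obstacle is the arrow step — pinning down exactly why, off the caterpillar locus, the off-diagonal arrow coefficient can be killed by a change of presentation compatible with the standard form — and keeping careful track of the loop and deformed-loop subcases there.
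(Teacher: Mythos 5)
Your overall strategy (order a path basis, show each $f\in H'$ is triangular with respect to it) is the paper's strategy, but your key step is stated incorrectly, and the error is exactly at the point you yourself flag as "the main obstacle." You claim that for every arrow $\alpha$ one has $f(\alpha)=c_\alpha\alpha+(\text{strictly longer paths})$, the off-diagonal parallel-arrow coefficient being removable "by a harmless change of basis." This is false. Consider a pair of parallel arrows $(\beta,\beta')$ with $\pi^2(\beta)\neq\beta$ and $\pi^2(\beta')=\beta'$ (the paper calls this an \emph{exceptional pair}). Then $\beta'\pi(\beta')$ is a socle element, hence equals the (longer) power of the other cycle through $s(\beta)$ and lies in $J(A)^3$; consequently the relation $f(\beta)f(\pi(\beta'))=f(\beta\pi(\beta'))=0$ reads $t\,\beta'\pi(\beta')+(\text{terms in }J(A)^3)=0$ and does \emph{not} force the coefficient $t$ of $\beta'$ in $f(\beta)$ to vanish. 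Indeed $\beta\mapsto\beta+t\beta'$ typically extends to a genuine element of $H'$, and no change of presentation of the standard form kills $t$. So the map $f-\mathrm{Id}$ does not strictly raise path length, and the pure length order does not trigonalize $H'$.

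The paper's resolution is different from yours: it keeps the nonzero coefficient and instead \emph{refines the order}, breaking ties among radical-degree-one basis elements by declaring $\beta<\beta'$ for each exceptional pair. Then $f(\beta)=k_\beta\beta+k_{\beta,\beta'}\beta'+r$ is still lower triangular, and a separate computation (using that $\beta\pi(\beta)\notin\soc(A)$ because $\pi^2(\beta)\neq\beta$) shows the reverse coefficient $k_{\beta',\beta}$ \emph{is} zero, so $f(\beta')$ has no $\beta$-component. The non-caterpillar hypothesis enters precisely to guarantee that for parallel arrows one can always reach, along the $\pi$-orbits, either the exceptional configuration or a non-parallel pair (cases 1 and 2 of the paper), so that a consistent tie-breaking exists; your explanation of why caterpillars are excluded is in the right spirit but is not tied to this mechanism. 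Finally, for longer paths your assertion that every cross term is "zero or strictly longer" also needs the observation that $\beta'\pi(\beta),\ \pi^{-1}(\beta)\beta'\in J(A)^3$ for exceptional pairs; without it the induction on length is not justified. As written, the proposal therefore has a genuine gap at the arrow step, and the proposed fix (change of basis) would not work.
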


\begin{proof}
Let $P$ be a set of paths in $Q$ which forms a basis for $\kk Q/I$,
such that $\alpha^2\notin P$ for $\alpha\in \mathcal{L}$ and all primitive idempotents $\{e_v\}_{v\in Q_0}$ and all arrows of $Q$ are in $P$.
For each $p\in P$ let $l_p=max\{k:p\in J(A)^k\}$, with the convention that $l_{e_v}=0, v\in Q_0$ (so $l_p$ is the length of the longest path equal to $p$). A pair
$(\beta,\beta')\in Q_1\times Q_1$ with $s(\beta)=s(\beta'),
e(\beta)=e(\beta')$ ($\beta , \beta'$ are parallel arrows) and
$\pi^2(\beta)\neq \beta,\pi^2 (\beta')=\beta'$ will be called an exceptional pair.

Let us consider some linear extension of the following partial order on $P$:

1)If $l_p<l_q$, then $p<q$.

2)If $(\beta,\beta')$ is an exceptional pair, then $\beta<\beta'$. Note that since $|Q_0|>1$, $l_{\beta}=l_{\beta'}=1$. 

We are going to express the matrix of an automorphisms of $A$ in the basis $P$  with respect to this linear order and show that, it is lower triangular, i.e, for $f\in H'$ and $p\in P$ we
have $f(p)=k_p p+\sum_{p'>p} k_{p,p'}p'$.

Let us consider $p=\beta \in Q_1,$ such that $l_\beta=1$. If $\beta$ has no
arrows parallel to it, then 
$f(\beta)=k_{\beta}\beta+r$ where $r\in J(A)^2$ and
we are done. Now suppose that $\beta$ and $\beta'$ are two parallel arrows,
in this case we can have $f(\beta)=k_{\beta}\beta+k_{\beta,\beta'}\beta'+r$
with $k_{\beta,\beta'} \neq 0$, $r\in J(A)^2$. Note that since $|Q_0| \neq 1$, $\beta$ is not a loop. There are three possible cases:

1) $\pi(\beta), \pi(\beta')$ are not parallel. In this case $f(\pi(\beta'))=k_{\pi(\beta')}\pi(\beta')+r_1$, $r_1\in J(A)^2$. Then $0=f(\beta\pi(\beta'))=f(\beta)f(\pi(\beta'))=k_{\beta,\beta'}k_{\pi(\beta')}\beta'\pi(\beta') + r',$ $r' \in {J(A)^3}$. A path of length two $\beta'\pi(\beta')$ belongs to $J(A)^3$, hence $\beta'\pi(\beta')\in\rm{soc}(A)$. Since  $\pi(\beta), \pi(\beta')$ are not parallel, $(\beta,\beta')$ is an exceptional pair and $\beta<\beta'$. The same argument for $(\beta',\beta)$ gives $k_{\beta',\beta}k_{\pi(\beta)}\beta\pi(\beta) \in \rm{soc}(A)$, hence $k_{\beta',\beta}k_{\pi(\beta)}=0$, thus  $k_{\beta',\beta}=0$, so $f(\beta')=k_{\beta'}\beta'+r''$, $r''\in J(A)^2$. In particular, this means that $k_{\beta}\in \kk^*$, since $\beta$ belongs to the image of $f$.

2)  $\pi(\beta), \pi(\beta')$ are parallel arrows but $\pi^l(\beta), \pi^l(\beta')$ are not parallel for some $l$ (we take the minimal $l$). In this case $(\pi^{l-1}(\beta),\pi^{l-1}(\beta'))$  is not an exceptional pair (otherwise $s(\pi^{l-1}(\beta))$ has $3$ incoming arrows) 
and $f(\pi^l(\beta'))=k_{\pi^l(\beta')}\pi^l(\beta') + r',$ $r' \in {J(A)^2}$. So $0=f(\pi^{l-1}(\beta))f(\pi^l(\beta'))$ implies $f(\pi^{l-1}(\beta))=k_{\pi^{l-1}(\beta)}\pi^{l-1}(\beta) +r,$ $r\in {J(A)^2}$ and the same holds for $\beta'$. Then by decreasing induction on $i$ we obtain in the same way that
$f(\pi^i(\beta'))=k_{\pi^i(\beta')}\pi^i(\beta') + r',$ $r'\in {J(A)^2}$, $f(\pi^i(\beta))=k_{\pi^i(\beta)}\pi^i(\beta) + r,$ $r\in {J(A)^2}$ for all
$0\leq i\leq l$, in particular, for $i=0$.

3) $\pi^l(\beta),\pi^l(\beta')$ are parallel for all $l$. In this case $A=\kk Q/I$ is a caterpillar.

For an arbitrary
$p_i \in P$, choose a presentation $p_i=\beta_1\dots\beta_n$ with $n$ maximal. Then $f(p_i)=f(\beta_1)\dots
f(\beta_n)=\prod_{i} k_{\beta_i}\beta_1\dots\beta_n +\sum k_jp_j$, where
$p_j>\beta_1\dots\beta_n$. Indeed, $\beta_1\dots\beta_n$ is of the form $\beta_1\pi(\beta_1)\dots\pi^n(\beta_1)$, and since for an exceptional pair $(\beta,\beta')$ we have $\beta'\pi(\beta)=0,\pi^{-1}(\beta)\beta'=0$, the sum $\sum k_jp_j$ belongs to $J(A)^{n+1}$.
\end{proof}

\subsection{Decomposition with the unipotent subgroup} 
Computation of the groups $Out^0(A)$ for all symmetric stably biserial algebras  might turn out to be quite technical, so we want to use some invariant of $Out^0(A)$ preserved by isomorphisms of algebraic groups, which we would be able to compute quite easily. This invariant is the rank of the maximal torus of $Out^0(A)$.

Let us consider maximal unipotent subgroups in $H'$ and $I=H'\cap Inn(A)$, denoted respectively by $U_{H'}$ and $U_I$. These groups are given by the intersection of $H'$ (respectively $I$) with $U(l,\kk)$ the group of (lower) unitriangular matrices. We can consider the following diagram of algebraic groups:

$$
\xymatrix  { 1 \ar[r] & U_I \ar[r]\ar[d] &I\ar[r]\ar[d]& D_I \ar[r]\ar[d] &1\\
1 \ar[r]& U_{H'} \ar[r]\ar[d]& H'\ar[r]\ar[d]& D_{H'} \ar[r]\ar[d] &1\\
1 \ar[r] &U_{H'}/U_I \ar[r]& H'/I\ar[r]& D_{H'}/D_I \ar[r]& 1
}
$$
An easy diagram chasing shows that the map $D_I\rightarrow D_{H'}$ is an embedding. As a quotient of a trigonalizable group $H'/I$ is trigonalizable, $D_{H'}/D_I$ is diagonalizable and $U_{H'}/U_I$ is the maximal unipotent subgroup of $H'/I$, it contains all unipotent subgroups of $H'/I$ \cite[Theorem 16.6]{Milne}. Thus we can consider another diagram:

$$
\xymatrix  {   & 1\ar[d]\ar[r] &1\ar[r]\ar[d]& X \ar[d] &\\
1 \ar[r]& (U_{H'}/U_I)^0 \ar[r]\ar[d]& (H'/I)^0\ar[r]\ar[d]& T(A) \ar[r]\ar[d] &1\\
1 \ar[r] &U_{H'}/U_I \ar[r]\ar[d]& H'/I\ar[r]\ar[d]& D_{H'}/D_I \ar[r]\ar[d]& 1\\
 &Y \ar[r]& Z\ar[r]& W \ar[r]& 1
}
$$

Since $(H'/I)^0$ is connected and solvable its maximal unipotent subgroup is connected and thus coincides with $(U_{H'}/U_I)^0$. Note also that all maximal tori of $(H'/I)^0$ are conjugate. The groups $D_I,D_{H'},D_{H'}/D_I$ are diagonalizable. We also get the following exact sequence $1\rightarrow X\rightarrow Y \rightarrow Z \rightarrow W \rightarrow 1$, where $Y, Z$ are finite, then $X,W$ are finite as well. Hence the rank of $T(A)$ and $D_{H'}/D_I$ coincide. We have proved the following lemma.

\begin{lem}
 In the notation of the previous construction, the rank of $D_{H'}/D_I$ coincides with the rank of the maximal torus $T(A)$ of $Out^0(A)=(H'/I)^0$ and so is a derived invariant of $A$.
\end{lem}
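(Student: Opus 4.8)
The statement is essentially the content of the two sentences preceding it. The plan is first to identify $D(A)$ with a maximal torus of $Out^0(A)$. The group $Out^0(A)$ is connected and solvable, being the identity component of the trigonalizable group $Out(A)$, so over the algebraically closed field $\kk$ its set of unipotent elements is a closed connected normal subgroup $R$ with $Out^0(A)/R$ a torus; moreover $Out^0(A)$ is the semidirect product of $R$ with any maximal torus, and all maximal tori are conjugate of dimension $\dim(Out^0(A)/R)$. Since $(U_{H'}/U_I)^0$ is connected, unipotent, and contains every unipotent subgroup of the connected solvable group $Out^0(A)$, it equals $R$; hence the top row $1\to (U_{H'}/U_I)^0\to Out^0(A)\to D(A)\to 1$ of the last diagram exhibits $D(A)=Out^0(A)/R$ as (isomorphic to) a maximal torus of $Out^0(A)$, so that $\rk D(A)=\dim D(A)$.

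Next I would read the rank equality off the same diagram. The induced homomorphism $j\colon D(A)\to D_{H'}/D_I$ is injective: its kernel is $(Out^0(A)\cap(U_{H'}/U_I))/(U_{H'}/U_I)^0$, and $Out^0(A)\cap(U_{H'}/U_I)$, being a subgroup of $Out^0(A)$ all of whose elements are unipotent, lies in $R=(U_{H'}/U_I)^0$. Its image is the subgroup $Out^0(A)(U_{H'}/U_I)/(U_{H'}/U_I)$ of $D_{H'}/D_I=Out(A)/(U_{H'}/U_I)$, of finite index there because $Out^0(A)$ has finite index in $Out(A)$; equivalently, in the notation of the diagram the snake sequence $1\to X\to Y\to Z\to W\to 1$ has $X$ trivial and $Y,Z$ finite (they are the component groups of $U_{H'}/U_I$ and $Out(A)$), so $W$ is finite. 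Either way $j$ is an injective homomorphism of diagonalizable groups with finite cokernel, whence $\dim D(A)=\dim(D_{H'}/D_I)$, i.e. $\rk D(A)=\rk(D_{H'}/D_I)$. Finally, by \cite{HZS,Rouq} the algebraic group $Out^0(A)$ is a derived invariant of $A$, and by the first step the rank of its maximal torus equals $\rk D(A)$; hence $\rk D(A)$, and therefore $\rk(D_{H'}/D_I)$, is a derived invariant of $A$.

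Given the preceding construction no serious obstacle remains. The only points needing care are the structure theory used in the first step --- that the unipotent elements of a connected solvable algebraic group form its unipotent radical, so that $(U_{H'}/U_I)^0$ really is the maximal unipotent subgroup of $Out^0(A)$ --- and, should one prefer the purely diagram-theoretic route, the exactness of $1\to X\to Y\to Z\to W\to 1$ for these a priori non-abelian algebraic groups, which, the outer terms being abelian, is routine diagram chasing. The remaining facts, finiteness of the groups of connected components and preservation of dimension under an injective homomorphism of algebraic groups with finite cokernel, are standard.
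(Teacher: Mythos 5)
Your proposal is correct and follows essentially the same route as the paper: identify $(U_{H'}/U_I)^0$ with the maximal (connected) unipotent subgroup of the connected solvable group $Out^0(A)$, so that $D(A)$ is a maximal torus, and then read off from the diagram that the induced map $D(A)\to D_{H'}/D_I$ has finite kernel and finite cokernel (the paper gets this from the exact sequence $1\to X\to Y\to Z\to W\to 1$ with $Y,Z$ finite; you sharpen $X$ to being trivial, which is not needed), whence the ranks agree. The final appeal to \cite{HZS,Rouq} for derived invariance of $Out^0(A)$ is also as in the paper.
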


\subsection{Computation of the rank of $D_{H'}/D_I$} Both $D_{H'}$ and $D_I$ are induced by the projection map from the group of lower triangular matrices to the group of diagonal matrices. So we are going to find out what elements can appear on the diagonal of the matrices from $H'$ and $I$. Clearly the diagonal entries corresponding to the arrows of the quiver determine all other diagonal entries, so we are going to restrict our attention to them.  Let $\kk^*$ and $A^*$ be the multiplicative groups of $\kk$ and $A$ respectively.

\begin{lem}
Let $A=\kk Q/I$ be a symmetric stably biserial algebra. There is an isomorphism of affine  algebraic groups $D_I\simeq (\kk^*)^{|Q_0|-1}$.
\end{lem}

\begin{proof} Recall that $Inn(A)=\{f_a| a\in A^* \}$, where $f_a(x)=axa^{-1}$. Each $a\in A^* $ can be uniquely written as
$a=\sum_{i\in Q_0}a_ie_i+r$, where $a_i\in \kk^*$, $r\in J(A)$. Then
$a^{-1}=\sum_{i\in Q_0}a_i^{-1}e_i+r'$, $r'\in J(A)$ and the action of
$f_a$  on $J(A)/J(A)^2$ depends only on $a_i$'s. For $c\in \kk^*$, $f_a$ clearly coincides with $f_{ca}$.

Let $f_a$ be an element of $ H' \cap Inn(A)$ and let  $\overline {f_a}:=f_a\pmod {U_I}$ be the class of $f_a$. Fix any spanning tree $\Delta$ of $Q$ (ignoring the orientation of the arrows of $Q$), let $\{\alpha_i\}_{1\leq i\leq Q_0-1}$ be the corresponding set of arrows, that is $\alpha_i\in \Delta$. For each arrow $\alpha_i$ we have $f_a(\alpha_i)=a\alpha_i a^{-1}=a_{s(\alpha_i)}a^{-1}_{e(\alpha_i)}\alpha_i \pmod {J(A)^2}$. Let us define the map $D_I \xrightarrow{\eta} (\kk^*)^{|Q_0|-1}$ as $\overline{f_a} \to(a_{s(\alpha_1)}a^{-1}_{e(\alpha_1)},\dots, a_{s(\alpha_{{|Q_0|-1}})}a^{-1}_{e(\alpha_{{|Q_0|-1}})})$. The equality $f_a(\alpha_i)=\alpha_i \pmod {J(A)^2}$  for the elements of $U_I$ guarantees that the map is well defined.  

Conversely, consider $(k_1, \dots, k_{|Q_0|-1}) \in (\kk^*)^{|Q_0|-1}$.
Since $\alpha_i$'s form a spanning tree, one can find $\{a_i\}_{i\in Q_0}$ such that $k_i=a_{s(\alpha_1)}a^{-1}_{e(\alpha_1)}$ for all $i$. Moreover,  any other tuple $\{a_i'\}_{i\in Q_0}$, which corresponds to the same $(k_1, \dots, k_{|Q_0|-1})$ differs from $\{a_i\}_{i\in Q_0}$ by some multiple $c\in \kk^*$ such that $a_i'=ca_i$ for $1\leq i\leq |Q_0|-1$. This gives rise to a well-defined map $ (\kk^*)^{|Q_0|-1} \xrightarrow{\theta} D_I$, $(k_1, \dots, k_{|Q_0|-1})\mapsto \overline {f_a}$, where $a=\sum a_ie_i$ (note that $f_a\in H' \cap Inn(A)$).  Clearly $\eta\theta ((k_1, \dots, k_{|Q_0|-1}))=(k_1, \dots, k_{|Q_0|-1})$.

The element $\theta\eta(\overline{f_a})$ is given by $\overline{f_{a'}}$, for some $a'\in A^*$ such that $a-ca'\in J(A)$ for some $c\in \kk^*$. For  $a-ca'\in J(A)$ we have $a(ca)^{-1}=1+r'$ for $r'\in J(A)$, so for any path $p=\beta_{1} \beta_{2}\dots\beta_{k}$ we have $f_af^{-1}_{ca'}(p)=f_{a(ca)^{-1}}(p)=p+r'', r''\in J(A)^{k+1}$. As the order on the basis $P$ agrees with the path length $\overline {f_af^{-1}_{ca'}}=1$ and $\overline {f_a}=\overline {f_{ca'}}=\overline {f_{a'}}$. Thus, $\theta\eta(\overline{f_a})=\overline{f_a}$ and we get the desired bijection.
\end{proof}

Let us consider an algebraic group $D_\Gamma$, which can be constructed from the data $(Q,\pi,m,\mathcal{L})$, here $\Gamma$ stands for the Brauer graph corresponding to the data $(Q,\pi,m,\mathcal{L})$. The set $\mathcal{L}$ is assumed to be empty in case $char(\kk)\neq 2$. The group $D_\Gamma$ is a subgroup of $(\kk^*)^{2|E(\Gamma)|+1}$. The first $2|E(\Gamma)|$ entries $k_\alpha$ are labelled by the arrows of $Q$, the last entry is denoted by $\underline{k}$. The subgroup is given by the following equations: $(k_\alpha)^2=t_\alpha \underline{k}$ for each deformed loop $\alpha$ and
$\prod_{\alpha\in C} k_\alpha^{m(C)}=\underline{k}$ for each $C\in Q_1/\pi$, the  multiplicity of $C$ is denoted by $m(C)$.

\begin{prop}
Let $A$ be a symmetric stably biserial algebra corresponding to the data $(Q,\pi,m,\mathcal{L})$. Then $D_{H'}\simeq D_\Gamma$. 
\end{prop}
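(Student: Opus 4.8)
The plan is to identify $D_{H'}$ with a concrete subgroup of the diagonal torus by recording the diagonal entries of automorphisms in $H'$. First I would fix the ordered path basis $P$ from the trigonalizability lemma and recall that, by that lemma, every $f\in H'$ acts on $P$ by a lower triangular matrix, so $D_{H'}$ is the image of $H'$ under the projection to the diagonal. Since the diagonal entry on any path $p=\beta_1\cdots\beta_n$ (written with $n$ maximal) is $\prod_i k_{\beta_i}$, the whole diagonal is determined by the entries $k_\alpha$, $\alpha\in Q_1$, together with the common value $\underline{k}$ on the socle elements $s_v$; this gives a map $D_{H'}\to(\kk^*)^{2|E(\Gamma)|+1}$, $f\mapsto\big((k_\alpha)_{\alpha\in Q_1},\underline{k}\big)$, which is an embedding of algebraic groups because distinct diagonal matrices are distinct.

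Next I would determine the image. The relations defining $A$ in Theorem \ref{TheoremDescriptionOfStablyBi} must be preserved by $f$; passing to the diagonal (i.e.\ to $J(A)^k/J(A)^{k+1}$, or directly comparing top-degree terms in the socle), the relation of type (2) forces $\prod_{\alpha\in C}k_\alpha^{m(C)}=\underline{k}$ to be independent of the chosen cycle through a given vertex, and the relation of type (3), $\alpha^2-t_\alpha C(\alpha)^{m(C(\alpha))}$, forces $(k_\alpha)^2=t_\alpha\underline{k}$ for each deformed loop $\alpha$. Thus the image lands in $D_\Gamma$. For the reverse inclusion, given a point $\big((k_\alpha),\underline{k}\big)\in D_\Gamma$ I would define $f$ on arrows by $f(\alpha)=k_\alpha\alpha$ (and $f(e_v)=e_v$), extend multiplicatively, and check this respects $I$: relations of type (1) and (4) are automatic since $f$ scales each path, type (2) and type (3) hold precisely by the defining equations of $D_\Gamma$. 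Such an $f$ is invertible (inverse scales by $k_\alpha^{-1}$) and fixes $B$, so $f\in H'$, and it maps to the given point; modulo $U_I$ this gives a section, so the map $D_{H'}\to D_\Gamma$ is onto.

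Finally I would note that both constructions are visibly morphisms of affine algebraic groups over $\kk$ (everything is polynomial in the $k_\alpha$ and $\underline{k}$ and their inverses), so the bijection is an isomorphism of algebraic groups; since $\kk$ is algebraically closed this also follows on the level of $\kk$-points in the sense needed later. The main obstacle I expect is the careful bookkeeping in the reverse direction: one must verify that the assignment $f(\alpha)=k_\alpha\alpha$ is consistent across the different generators $p_i$ of $P$ when a socle element $s_v$ arises from two different cycles at $v$ (this is exactly where $\prod_{\alpha\in C}k_\alpha^{m(C)}=\underline{k}$ must not depend on $C$), and that the deformed-loop relation $\alpha^2=t_\alpha C(\alpha)^{m(C(\alpha))}$ is compatible with $\alpha^2$ having been excluded from $P$ — here one uses that the socle value of $\alpha^2$ is $t_\alpha\underline{k}$, matching $(k_\alpha)^2$. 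One also has to keep track of the exceptional pairs from the trigonalizability lemma to be sure no extra off-diagonal contributions interfere with reading off the diagonal, but these lie in higher radical degree and do not affect the diagonal entries.
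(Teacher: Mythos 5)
Your overall strategy matches the paper's: read off the diagonal entries $(k_\alpha)$ and $\underline{k}$, show they satisfy the defining equations of $D_\Gamma$, and get surjectivity by defining $f(\alpha)=k_\alpha\alpha$ for a given point of $D_\Gamma$. The reverse inclusion and the algebraic-group formalities are fine. The gap is in the forward direction, concentrated in the sentence where you dismiss the off-diagonal contributions because ``these lie in higher radical degree and do not affect the diagonal entries.'' That is exactly the point that needs proof, and the radical-degree argument does not settle it: the socle generator $s_v$ is itself the element of highest radical degree at its vertex, so when you expand $f(\bar{C}(\alpha))=f(\alpha)f(\pi(\alpha))\cdots$, cross terms involving the off-diagonal coefficients $k_{\pi^i(\alpha),p}$ can land exactly on $s_v$ and shift its coefficient away from $\prod_{\alpha'\in C(\alpha)}k_{\alpha'}^{m(C(\alpha))}$. (Note also that the two socle paths $\bar{C}(\alpha)=\bar{C}(\beta)$ at a vertex may have different path lengths, so ``comparing top-degree terms'' of a type (2) relation in $J(A)^k/J(A)^{k+1}$ is not even well posed.) The paper has to (i) classify which paths $p$ can occur with $k_{\alpha,p}\neq 0$ --- subpaths of $\bar{C}(\alpha)$ or the complementary almost-socle path $\beta^{-1}\bar{C}(\beta)$ --- and (ii) run a case analysis showing that the unwanted contributions to $f(\bar{C}(\alpha))$ arise only for caterpillars or for $|Q_0|=1$ with a deformed loop, which are excluded by hypothesis.

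The same issue is more serious for the deformed-loop equation. For a deformed loop $\alpha$, $f(\alpha)$ may contain many off-diagonal terms (all the closed subpaths $(w_\alpha\alpha)^i$, $\alpha(w_\alpha\alpha)^i$, etc.), and the coefficient of $\alpha^2=s_v$ in $f(\alpha)^2$ is not $k_\alpha^2$ on the nose: it contains products of pairs of off-diagonal coefficients. The paper proves these cancel by first deducing $k_{(w_\alpha\alpha)^i}+k_{(\alpha w_\alpha)^i}=0$ inductively from the vanishing of intermediate coefficients and then pairing up the remaining terms, and this cancellation uses $char(\kk)=2$ in an essential way. So $(k_\alpha)^2=t_\alpha\underline{k}$ is the outcome of a genuine computation, not a consequence of reading off top-degree terms; your proposal does not supply it.
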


\begin{proof}
First assume that the characteristic of the field $\kk$ is arbitrary. Any $f\in H'$ is defined by its action on the arrows of $Q$: $f(\alpha)=k_\alpha \alpha + \sum_{p>\alpha}k_{\alpha,p}p$. We need to show that there exists an automorphism $f\in H'$ with the tuple $(k_\alpha)$ as the coefficients of $\alpha$ in $f(\alpha)=k_\alpha \alpha + \sum_{p>\alpha}k_{\alpha,p}p$ if and only if there exists $\underline{k}\in \kk^*$ such that the equations from the description of the group $D_\Gamma$ are satisfied.

Assume that  for $(k_\alpha)\in (\kk^*)^{2|E(\Gamma)|}$ there exists $\underline{k}\in\kk^*$ such that $(k_\alpha,\underline{k})\in D_\Gamma$, then we can define $f\in H'$ by  $f(\alpha)=k_\alpha \alpha$. This formula clearly gives an automorphism of $A$ in $H'$.

Let us prove that for any $f\in H'$ the tuple of coefficients $(k_\alpha)$ of $\alpha$ in $f(\alpha)$ is an element of $D_\Gamma$ for some $\underline{k}\in\kk^*$. For that we need to better understand which coefficients $k_{\alpha,p}$ can be non-zero. For an arrow $\alpha$, let us denote by  $\bar{C}(\alpha):=C(\alpha)^{m(C(\alpha))}$ the maximal power of the cycle passing through $\alpha$. Let us show that if $k_{\alpha,p}\neq 0$ and $p$ is not a subpath of $\bar{C}(\pi^i(\alpha))$ for any $i$, then $p=\beta^{-1}\bar{C}(\beta)$ for some arrow $\beta$ with $s(\beta)=e(\alpha)$ and $e(\beta)=s(\alpha)$. Assume this is not the case and let us take $p=\beta_1\cdots\beta_t$ with $t$ minimal, since $s(p)=s(\alpha)$, $e(p)=e(\alpha)$ and $p$ is not a subpath of $\bar{C}(\alpha)$, $p\notin \soc (A)$. Since there are at most two maximal paths going through any vertex, $p$ is a subpath of some $\bar{C}(\delta)$ with $\delta\in Q_1, \delta\neq \alpha$, and thus $p$ is unique. Let $\beta$ be the arrow such that $\beta p \in \bar{C}(\beta)$, such  $\beta$ exists and $e(\beta)=s(\alpha)$ but $\alpha\neq \pi(\beta)$. Note that $\beta$ is not a loop with $\pi(\beta)\neq\beta$, otherwise $\alpha=\pi(\beta)$ and $p$ is a subpath of $\bar{C}(\alpha)$. The relation $f(\beta)f(\alpha)=0$ implies that the coefficient before $\beta p$, which contains $k_\beta k_{\alpha,p}$ should be 0. Assume $\beta p\notin \soc A$, then by the minimality of the length of $p$, $k_{\alpha,p}=0$. So $\beta p\in \soc A$ as desired.

Let us now check that for any $f\in H'$ the set $(k_\alpha)$ satisfies the equations $\prod_{\alpha\in C} k_\alpha^{m(C)}=\underline{k}$ for some $\underline{k}\in\kk$. Fix $\alpha\in Q_1$ and let us compute $f(\bar{C}(\alpha))$. It has a summand $\prod_{\alpha'\in C(\alpha)} k_{\alpha'}^{m(C(\alpha))}\bar{C}(\alpha)$, if it has any other summands, then this summands can only appear in one of the following 3 situations: 

1) as a product of the elements of the form $k_{\pi^i(\alpha),p}p$, where $p$ is not a subpath of $\bar{C}(\pi^j(\alpha))$ for any $j$. This situation is possible only in the case of a caterpillar with two simple modules or in the case $|Q_0|=1$, we do not consider any of these cases.  

2) as a product of subpaths of $\bar{C}(\pi^i(\alpha))$ and paths, which are not subpaths of $\bar{C}(\pi^j(\alpha))$ for any $j$, this is possible only in the situation $|Q_0|=1$ and $A$ has a deformed loop, which we also do not consider.  

3) as a product of subpaths of $\bar{C}(\pi^j(\alpha))$, at least one of which comes from $f(\pi^i(\alpha))$ and is not $\pi^i(\alpha)$. Note that all these subpaths are arrows, otherwise the product is zero. Since at least one of the subpaths comes from $f(\pi^i(\alpha))$ and is not $\pi^i(\alpha)$ all of them come from $f(\pi^j(\alpha))$ but are not $\pi^j(\alpha)$, otherwise we are in the situation $|Q_0|=1$ and $A$ has a deformed loop again. Hence every $\pi^i(\alpha)$ has a parallel arrow and $A$ is a caterpillar, which we do not consider. 

So $f(\bar{C}(\alpha))=\prod_{\alpha'\in C(\alpha)} k_{\alpha'}^{m(C(\alpha))} \bar{C}(\alpha)$. Since the relation $\bar{C}(\alpha)=\bar{C}(\beta)$ holds for any $\alpha,\beta\in Q_1$ with $s(\alpha)=s(\beta)$ and the graph $\Gamma$ is connected, we can denote by $\underline{k}$ the product $\prod_{\alpha'\in C(\alpha)} k_{\alpha'}^{m(C(\alpha))}$ for some fixed $\alpha$ and get that $\prod_{\beta\in C} k_\beta^{m(C)}=\underline{k}$ for any $\pi$-cycle $C$. This finishes the proof in the situation, when the number of deformed loops $d$ is zero. Indeed, for any tuple $(k_\alpha)$ coming from $f\in H'$ we have foung $\underline{k}$ such that $(k_\alpha,\underline{k})\in D_\Gamma$. In particular, the proof is finished for $char(\kk)\neq2$.  

From here on we assume $char(\kk)=2$. Let us deal with the  equations $(k_\alpha)^2=t_\alpha \underline{k}$, for each deformed loop $\alpha$. For a deformed loop $\alpha$ let $w_\alpha$ be the path that makes $\alpha w_\alpha$ into a $\pi$-cycle. Let $m$ be the multiplicity of this cycle. Then $f(\alpha)=k_\alpha \alpha+\sum_{p}k_pp$, where $p$ is of the form: $(w_\alpha\alpha)^i,i=1,\dots m-1$, $\alpha(w_\alpha\alpha)^i,i=0,\dots m-1$, $(w_\alpha\alpha)^iw_\alpha,i=0,\dots m-1$, $\alpha(w_\alpha\alpha)^iw_\alpha,i=0,\dots m-1$. Since $\alpha$ is a deformed loop it appears only in socle relations and there are no restrictions on $k_p$ coming from zero relations of the form $\beta\gamma=0$ for $\gamma\neq\pi(\beta)$ in the algebra. 

Let us consider $f(\alpha)^2$. Assume that $m>1$: the coefficient before $\alpha w_\alpha \alpha$ in $f(\alpha)^2$  should be zero, since the socle of $A$ is preserved by any automorphism. This gives $k_\alpha k_{w_\alpha \alpha}+ k_\alpha k_{ \alpha w_\alpha}=0$, so since $k_\alpha\in \kk^*$, $k_{w_\alpha \alpha}+  k_{ \alpha w_\alpha}=0$. Let us assume $k_{(w_\alpha \alpha)^i}+  k_{ (\alpha w_\alpha)^i}=0$, for $i<j<m$ and prove the same for $j$. Let us consider the coefficient of $\alpha(w_\alpha \alpha)^j$ in $f(\alpha)^2$. For any entry of the form $k_{(\alpha w_\alpha )^i }k_{(\alpha w_\alpha )^{j-i}\alpha }$ there is an entry of the form $k_{(\alpha w_\alpha )^{j-i}\alpha }k_{( w_\alpha \alpha )^i}$ and by induction hypothesis they cancel out. The only entries left are $k_\alpha k_{(w_\alpha \alpha)^j}+  k_{ (\alpha w_\alpha)^j} k_\alpha =0$, so we are done. 

Let us now consider the coefficient of $\alpha^2=(\alpha w_\alpha)^m=(w_\alpha \alpha )^m$ in $f(\alpha)^2$. For any entry of the form $k_{( w_\alpha \alpha )^i w_\alpha} k_{( \alpha w_\alpha  )^{m-i-1}\alpha}$ there is an entry $k_{( \alpha w_\alpha  )^{m-i-1}\alpha} k_{( w_\alpha \alpha )^i w_\alpha} $, they cancel out since $char (\kk)=2$, the entries $k_{( w_\alpha \alpha )^i} k_{( w_\alpha \alpha )^{m-i}}$ and $k_{(\alpha w_\alpha  )^i} k_{(\alpha w_\alpha  )^{m-i}}$ cancel out because of the previous paragraph (which we need only for $m$ even, otherwise they cancel out anyway). So we are left with  $f(\alpha)^2=k_\alpha^2\alpha^2$ and $k_\alpha^2=t_\alpha \underline{k}$, as desired, because of the relations $\alpha^2=t_\alpha\bar{C}(\alpha)$ and the fact that $f(\bar{C}(\alpha))=\underline{k}\bar{C}(\alpha)$.
\end{proof}

\begin{lem}
Let $A$ be a stably biserial algebra corresponding to the data $(Q,\pi,m,\mathcal{L})$. The rank of  $D_{H'}$ is  $|Q_1|-|Q_1/\pi|-d+1$, where $d=|\mathcal{L}|$ is the number of deformed loops in $A$.
\end{lem}

\begin{proof} 
Let us construct an epimorphism $j:  D_{H'} \rightarrow  (\kk^*)^{|Q_1|-|Q_1/\pi|-d+1}$ such that the kernel $ker(j)$ is finite. 

Each $\pi$-cycle contains an arrow, which is not a deformed loop. Let us fix one such arrow in each cycle and denote the collection of these arrows by $\mathcal{F}$. Let us label the elements of  $(\kk^*)^{|Q_1|-|Q_1/\pi|-d+1}$ by $x_\alpha$, $\alpha\in Q_1, \alpha\notin \mathcal{F}\cup\mathcal{L}$ and by an additional indeterminant $x$. Define the map $j$ as follows: $j((k_\alpha,\underline{k})):=((x_\alpha,x))$, where $x_\alpha=k_\alpha, \alpha\notin\mathcal{F}\cup\mathcal{L}$, $x=\underline{k}$. The map $j$ is surjective, since for any tuple $(x_\alpha,x)$ we can define  $k_{\gamma}=\sqrt{t_\gamma x}$ for $\gamma\in \mathcal{L}$ and $k_{\beta}=\sqrt[m(C(\beta))]{x/\prod_{\alpha'\in C(\beta), \alpha'\neq \beta, \alpha'\notin\mathcal{L}} x_{\alpha'}^{m(C(\beta))}\prod_{\alpha'\in C(\beta),  \alpha'\in\mathcal{L}}(\sqrt{t_{\alpha'} x})^{m(C(\beta))}}$ for $\beta\in \mathcal{F}$.

Let us compute the kernel of $j$. The tuple $(k_{\alpha},\underline{k})\in ker(j)$ if and only if $\underline{k}=1$, $k_\alpha=1$ for $\alpha\notin \mathcal{F}\cup\mathcal{L}$, $k_\gamma^2=t_\gamma$ for $\gamma\in \mathcal{L}$, $k_\beta^{2m(C(\beta))}=1/\prod_{\alpha'\in C(\beta),  \alpha'\in\mathcal{L}}t_{\alpha'}^{m(C(\beta))}$ for $\beta\in \mathcal{F}$. This clearly defines a finite group.

Passing to the groups of characters, if necessary, and using the equivalence between the category of diagonalizable groups and finitely generated commutative groups \cite[Theorem 12.9]{Milne}, we see that the rank of  $D_{H'}$ is ${|Q_1|-|Q_1/\pi|-d+1}$.
 \end{proof}

\begin{proof}[Proof of Theorem \ref{TheoremRank}]
Since $D_I$ is connected, its image belongs to the maximal torus in $D_{H'}$ and passing to the groups of characters again, the exact sequence $1\rightarrow D_I\rightarrow D_{H'}\rightarrow D_{H'}/D_I\rightarrow 1$, gives that the rank of  $D_{H'}/D_I$ is $|Q_1|-|Q_1/\pi|-d+1 -|Q_0| +1=|Q_0|-|Q_1/\pi|-d+2=|E(\Gamma)|-|V(\Gamma)|-d+2$. 
\end{proof}

\begin{ex}
Let $char(\kk)=2$ and let $A$ and $A_{def}$ be the algebras from Example \ref{ExRef}. Recall that the Brauer graph $\Gamma$ and the quiver $Q$ associated to both $A$ and $A_{def}$ are
\begin{center}
\begin{tikzpicture}
\node (v1) at (-4,1) {1};
\node (v2) at (-1.5,1) {2};

\draw[<-]  (v1) edge[bend right=30] (v2);

\draw[<-]  (v2) edge[bend right=30] (v1);

\node at (-3.1,1.1) {$\alpha$};
\node at (-2.35,0.85) {$\beta$};
\node at (-0.05,1) {$\gamma$};
\node at (-4.5,1) {$Q:$};

\node (v4) at (-11.85,1) {$\Gamma:$};
\node (v3) at (-8.85,1) {$\cdot$};
\node (v4) at (-11.35,1) {$\cdot$};
\draw[-]  (v3) edge (v4);
\draw[-]  plot[smooth, tension=.7] coordinates {(-8.7,0.85) (-7.85,0.7) (-7.85,1.3) (-8.7,1.15)};

\draw[<-]  plot[smooth, tension=.7] coordinates {(-1.2,0.85) (-0.35,0.7) (-0.35,1.3) (-1.2,1.15)};

\end{tikzpicture}.
\end{center}

\noindent $A$ is a Brauer graph algebra and $A_{def}$ is a symmetric stably biserial algebra with $1$ deformed loop. Thus $A$ corresponds to the data $(Q,\pi,m,\emptyset)$ and $A_{def}$ corresponds to the data $(Q,\pi,m,\{\gamma\})$ with $|\{\gamma\}|=1$, where the permutation $\pi$ is constructed from the Brauer graph $\Gamma$ for both algebras. 
The rank of the maximal torus $T(A)$ of $Out^0(A)$ is $|E(\Gamma)|-|V(\Gamma)|-0+2=2-2-0+2=2$ and the rank of the maximal torus $T(A_{def})$ of $Out^0(A_{def})$ is $|E(\Gamma)|-|V(\Gamma)|-1+2=2-2-1+2=1$. Algebras $A$ and $A_{def}$ are not derived equivalent.

If $char(\kk)\neq 2$ we have already seen that $A\simeq A_{def}$ and we need to use the description of $A$ in order to compute the rank. In that case the rank of $T(A)$ is $2$.
\end{ex}

Using the fact that Brauer graph algebras can be stably (and hence derived) equivalent only to symmetric stably biserial algebras \cite[Theorem 1 and 3]{AZ1}, Corollary \ref{CorClosed} can be deduced from Theorems \ref{TheoremRank} and \ref{TheoremDerInv} and Corollary \ref{CatLem}, as well as from the fact that for local algebras derived equivalence implies Morita equivalence.

\end{document}